\documentclass[11pt]{article}
\usepackage[letterpaper,margin=1.1in]{geometry}
\usepackage[T1]{fontenc}
\usepackage{lmodern}
\usepackage{amsmath,amssymb,amsthm,mathtools}
\usepackage{microtype}
\usepackage{xcolor}
\usepackage{graphicx}
\usepackage{titlesec}
\usepackage[colorlinks=true,linkcolor=blue,citecolor=blue,urlcolor=blue]{hyperref}
\hypersetup{pdftitle={Rational curves on toroidal compactifications of Picard modular varieties},pdfauthor={Soheil Memariansorkhabi and Sai-Kee Yeung}}
\hypersetup{bookmarksopen=true,bookmarksnumbered=true,pdfpagemode=UseOutlines}

\titleformat{\section}
  {\normalfont\large\bfseries\centering}
  {\thesection.}{0.35em}{}
\newcommand{\address}[1]{#1}
\newcommand{\email}[1]{{\hypersetup{urlcolor=black}\href{mailto:#1}{\texttt{#1}}}}

\newtheorem{maintheorem}{Theorem}

\newtheorem{theorem}{Theorem}[section]
\newtheorem{lemma}[theorem]{Lemma}
\newtheorem{proposition}[theorem]{Proposition}
\theoremstyle{remark}
\newtheorem{remark}[theorem]{Remark}

\newcommand{\B}{\mathbb B}
\newcommand{\C}{\mathbb C}
\newcommand{\Q}{\mathbb Q}
\newcommand{\R}{\mathbb R}
\newcommand{\Z}{\mathbb Z}
\newcommand{\PP}{\mathbb P}
\newcommand{\HH}{\mathbb H}
\newcommand{\OO}{\mathcal O}
\DeclareMathOperator{\SU}{SU}
\DeclareMathOperator{\U}{U}
\DeclareMathOperator{\SL}{SL}
\DeclareMathOperator{\GL}{GL}

\DeclareMathOperator{\PU}{PU}
\DeclareMathOperator{\diag}{diag}
\DeclareMathOperator{\Mat}{Mat}

\DeclareMathOperator{\Ric}{Ric}
\DeclareMathOperator{\Span}{Span}
\newcommand{\Gmod}{\Gamma_{\SL_2}(3)}
\newcommand{\Pset}{\mathcal P}
\numberwithin{equation}{section}
\title{Rational curves on toroidal compactifications\texorpdfstring{\\}{ }of Picard modular varieties}
\author{Soheil Memariansorkhabi \and Sai-Kee Yeung}
\date{}

\begin{document}
\maketitle

\begin{abstract}
It was known that the canonical bundle of a smooth toroidal compactification of a noncompact finite-volume ball quotient is nef in dimension $n\geq3$ and ample in dimension $n\geq6$. We prove that the canonical bundle is ample in dimension $n\geq4$ and show that this dimension bound is sharp. We construct infinitely many pairwise noncommensurable noncompact Picard modular threefolds whose smooth projective toroidal compactifications have non-ample canonical bundle. Each compactification contains a smooth rational curve meeting the interior and having canonical degree zero.

More generally, in every dimension $n\geq2$, we construct infinitely many pairwise noncommensurable noncompact Picard modular varieties whose smooth toroidal compactifications contain smooth rational curves meeting the interior. In particular, such curves persist in arbitrarily high dimension, even when the canonical bundle is ample.

\end{abstract}

\section*{Introduction}
\phantomsection
\addcontentsline{toc}{section}{Introduction}

Let $X=\Gamma\backslash\B^n$ be a noncompact finite-volume ball quotient of dimension $n\geq2$ with $\Gamma$ neat, and let $\overline{X}$ be its smooth projective toroidal compactification, with boundary $D=\overline{X}\setminus X$. This compactification is uniquely determined by $X$ \cite[Proposition~2.3]{DiCerboDiCerbo}.

In dimension $n=2$, Hirzebruch constructed a ball quotient whose smooth toroidal compactification is the blowup of an abelian surface at one point \cite{Hirzebruch}. The exceptional divisor is a rational curve not contained in the boundary. His construction yields infinitely many distinct ball quotient surfaces with birational smooth toroidal compactifications \cite{DiCerboStoverMultiple}. Di Cerbo and Stover later classified the five minimum-volume ball quotient surfaces admitting smooth toroidal compactifications \cite{DiCerboStover}; see also \cite[\S 3.6]{PrasadYeung} for earlier work on classification. From \cite{DiCerboStover}, one is this Hirzebruch example, while the other four are blowups of bielliptic surfaces at one point. Therefore each of the five compactifications contains an exceptional rational curve not contained in the boundary. Since the exceptional divisor $E$ of a blowup at a point satisfies $K_{\overline{X}}\cdot E=-1$, the canonical bundle is not nef and hence is not ample in any of these five cases.

The situation changes in dimension $n\geq3$.
Di Cerbo and Di Cerbo \cite[Theorem~1.1]{DiCerboDiCerbo}
proved that $K_{\overline{X}}$ is nef for $n\geq3$.
Bakker and Tsimerman
\cite[Theorem~C and Corollary~B]{BakkerTsimerman}
proved that $K_{\overline{X}}$ is big for $n\geq4$ and
ample for $n\geq6$. We show that it is ample for $n\geq 4:$

\begin{maintheorem}\label{thm:ample canonical}
    Let $X=\mathbb B^n/\Gamma$ be a finite-volume ball quotient of
dimension $n\geq4$, with neat $\Gamma.$
Then the canonical bundle of its smooth projective toroidal
compactification $\overline X$ is ample.
\end{maintheorem}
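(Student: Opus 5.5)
The plan is to show that $K_{\overline X}$ is strictly positive on every positive-dimensional subvariety, using the positivity already available from the boundary geometry and from Bakker--Tsimerman. The two inputs are:
\begin{itemize}
\item $K_{\overline X}+D$ is nef and big. It is the pullback of an ample line bundle on the Baily--Borel compactification under the contraction $\pi\colon\overline X\to X^{BB}$, which collapses each boundary component $D_i$ to a point.
\item Each $D_i$ is an abelian variety whose normal bundle $\OO(D_i)|_{D_i}$ is anti-ample.
\end{itemize}
Since $K_{\overline X}$ is nef (Di Cerbo--Di Cerbo) and big for $n\ge4$ (Bakker--Tsimerman), Nakai--Moishezon together with the Nakamaye description of the augmented base locus reduces ampleness to one claim. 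For every irreducible subvariety $Z\subset\overline X$ of dimension $d\ge1$ we need $K_{\overline X}^{d}\cdot Z>0$.

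\emph{Subvarieties in the boundary.} Suppose $Z\subset D_i$. Then $(K_{\overline X}+D)|_{D_i}$ is trivial, because $\pi$ contracts $D_i$ to a point. Hence $K_{\overline X}|_{D_i}=-D_i|_{D_i}$, which is ample. So $K_{\overline X}^d\cdot Z=(-D_i|_{D_i})^d\cdot Z>0$.

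\emph{Subvarieties meeting the interior.} This is where the dimension enters. I would write $K_{\overline X}=(K_{\overline X}+(1-\lambda)D)-\lambda D$ and revisit the Bakker--Tsimerman argument. They show that $K_{\overline X}+(1-\lambda)D$ is ample for $0<\lambda<\tfrac{n+1}{2\pi}$. Their proof compares two quantities on the part of $Z$ inside a cusp neighborhood:
\begin{itemize}
\item the volume of $Z$ with respect to the Bergman metric, whose Ricci form represents $-\tfrac{n+1}{4\pi}\,c_1$ of $K+D$ in their normalization;
\item the multiplicity of $Z$ along $D$, bounded below by a monotonicity/Ahlfors--Schwarz estimate for subvarieties of $\B^n$ passing deep into a horoball.
\end{itemize}
To reach $\lambda=1$ for $n=4,5$, where $\tfrac{n+1}{2\pi}<1$, I would sharpen this comparison in two places. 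First, use the sharp size of embedded cusp neighborhoods available for neat $\Gamma$: a horoball of height $1$ embeds, by Parker's/Hersonsky--Paulin type bounds. Second, use the exact volume growth of complex-geodesic pieces, which is extremal in monotonicity, in place of the general holomorphic-sectional-curvature bound. The target intermediate inequality is
\[
\lambda\,D\cdot(K_{\overline X}+D)^{d-1}\cdot Z\;<\;(K_{\overline X}+D)^{d}\cdot Z\qquad\text{with }\lambda=1 \text{ whenever } n\ge4.
\]
By the mixed-term expansion, this gives $K_{\overline X}^d\cdot Z>0$. The lower-order terms $D^j\cdot(K+D)^{d-j}\cdot Z$ for $j\ge2$ are controlled by the anti-ampleness of $D|_{D_i}$, exactly as in the boundary case.

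\emph{Expected obstacle.} The main difficulty is this sharpened interior estimate, that is, improving the constant $\tfrac{n+1}{2\pi}$ to something exceeding $1$ already at $n=4$. I would first try it for curves ($d=1$). There one can integrate the curvature of the induced metric over $C\cap X$ and use Gauss--Bonnet on the normalization, with logarithmic poles at the cusps. Each cusp point contributes a fixed amount of area inside the embedded horoball. The resulting lower bound on $(K+D)\cdot C$ per unit of $D\cdot C$ should be $\tfrac{n+1}{2}$ times the area of a cusp of a complex geodesic, measured against the cusp-section area. For $n\ge4$ this ratio should exceed $1$. For $n=3$ it should be able to equal $1$, which is consistent with the sharpness examples announced in the abstract. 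I would then extend to higher-dimensional $Z$ by slicing with general members of $|m(K+D)|$, which reduces the estimate to curves.
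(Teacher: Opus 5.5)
There is a genuine gap, and it is exactly the step you flag as the ``expected obstacle.''

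\textbf{The curve estimate is not supplied.} The step that carries your argument is improving the Bakker--Tsimerman constant to something larger than $1$ for \emph{every} curve $C\not\subset D$ when $n=4,5$. Your sketch only conjectures this (``should be'', ``should exceed''). Their constant is $(n+1)d/(4\pi)$, combined with the universal depth bound $d\ge2$. To reach $\lambda=1$ this way you would need an effective depth, or a local area per unit of $D\cdot C$, exceeding $4\pi/(n+1)$ (about $2.51$ for $n=4$), uniformly over all neat lattices and all curves. Citing Parker/Hersonsky--Paulin and ``extremality of complex geodesics'' does not provide this.

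\textbf{The reduction from $Z$ to curves is incorrect.} Since $(K_{\overline X}+D)|_D\equiv0$, every mixed term $D^j\cdot(K_{\overline X}+D)^{d-j}\cdot Z$ with $1\le j\le d-1$ vanishes. So your target inequality is vacuous for $d\ge2$, and the only boundary contribution is
\[
(-D)^d\cdot Z=-\bigl(-D|_D\bigr)^{d-1}\cdot(D\cdot Z)<0 .
\]
Anti-ampleness of $D|_D$ works against you here, not for you. Slicing by general members of $|m(K_{\overline X}+D)|$ does not recover this term. Those members are pulled back from hyperplane sections of the Baily--Borel compactification that avoid the cusps, so the resulting curves are disjoint from $D$. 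If you want to work only with curves, the correct route is different: prove $K_{\overline X}+(1-\lambda)D$ nef for some $\lambda>1$, then interpolate with the ample $K_{\overline X}+(1-\epsilon)D$. That route still needs the unproven uniform estimate.

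\textbf{The missing idea.} Because $K_{\overline X}$ is already nef and big, you never need positivity on all curves, only on rational ones. The paper's argument runs as follows.
\begin{itemize}
\item By Kodaira's lemma, $K_{\overline X}\sim_{\mathbb Q}A+E$ with $A$ ample and $E$ effective, and $(\overline X,\varepsilon E)$ is klt for small rational $\varepsilon>0$.
\item If $K_{\overline X}$ is not ample, Kleiman's criterion gives a nonzero $\alpha\in\overline{\operatorname{NE}}(\overline X)$ with $K_{\overline X}\cdot\alpha=0$. This $\alpha$ is negative against $K_{\overline X}+\varepsilon E+\tfrac{\varepsilon}{2}A$.
\item The cone theorem for this klt pair therefore writes $\alpha$ with a positive coefficient on some rational curve $C_i$, and nefness forces $K_{\overline X}\cdot C_i=0$.
\item Bakker--Tsimerman's Lemma~4.7, which the paper invokes for $n\ge4$, excludes such $K$-trivial rational curves.
\end{itemize}
This finishes the proof without any sharpening of the cusp estimate. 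Your route, if the uniform estimate could be proved, would give the stronger statement that $K_{\overline X}+(1-\lambda)D$ is ample for some $\lambda>1$. As written, that estimate is precisely what is missing.
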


Moreover, we show that this dimension threshold is sharp. We construct infinitely many Picard modular threefolds whose smooth toroidal compactifications have non-ample canonical bundle. These threefolds are arithmetic ball quotients associated with Hermitian forms over imaginary quadratic fields: 

\begin{maintheorem}\label{thm:threefolds}
There exist infinitely many pairwise noncommensurable noncompact Picard modular threefolds $X$ whose smooth toroidal compactifications $\overline{X}$ have non-ample canonical bundles. 
Moreover, each $\overline{X}$ contains a smooth rational curve $C$ not contained in the boundary $D=\overline{X}\setminus X$ and satisfying $K_{\overline{X}}\cdot C=0$.
\end{maintheorem}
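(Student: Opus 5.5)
My plan is to build the rational curve from a \emph{totally geodesic} sub-ball quotient, namely a modular curve sitting inside the threefold, and to check that its closure in $\overline{X}$ is a smooth $\PP^1$ whose canonical degree is forced to vanish. I will work with Picard modular groups $\Gamma\subset \SU(h,\OO_K)$, where $K=\Q(\sqrt{-d})$ is imaginary quadratic and $h$ is a Hermitian form of signature $(3,1)$ that splits off a sub-form $h_0$ of signature $(1,1)$. Such a decomposition $h=h_0\oplus h_1$, with $h_1$ positive definite of rank $2$, gives an embedded disc $\B^1\subset\B^3$. Its stabilizer in $\Gamma$ is commensurable with an arithmetic subgroup of $\SL_2(\Z)$, which is where the notation $\Gmod$ comes in. So I take $\Gamma$ to be a principal congruence subgroup, for instance of level $3$ or of level a prime above $3$. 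Then $\Gamma$ is neat, and the intersection with the sub-ball is contained in $\Gmod$. The image $C^\circ$ of $\B^1$ in $X$ is the modular curve $\Gmod\backslash\HH$, which has genus $0$ and four cusps. Its closure $C$ in $\overline{X}$ meets $D$ at the cusps. I will prove smoothness of $C$ in two places. At interior points I show $C^\circ$ is embedded, since any element of $\Gamma$ mapping the sub-ball to an intersecting translate must preserve it for neat congruence $\Gamma$ at this level. At the cusps I use the local toroidal chart: there the compactified curve passes transversally through the boundary abelian surface (in fact an elliptic curve times itself up to isogeny, the boundary being $E_{\sqrt{-d}}$-type), so $C\cong\PP^1$.

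The degree computation comes from adjunction-type reasoning with the Kähler–Einstein metric. I use the standard formula $K_{\overline{X}}=\pi^*(\text{automorphic})-D$, so that $K_{\overline X}+D$ restricted to $X$ is $(n+1)/2$ times the Bergman Kähler form, up to the normalization $\tfrac{n+1}{4\pi}\omega$. A totally geodesic complex geodesic disc has Bergman curvature $-2$ (holomorphic sectional curvature), so integrating over $C^\circ$ gives
\[(K_{\overline X}+D)\cdot C=\tfrac{n+1}{2}\,(2g-2+c)=2(2\cdot 0-2+4)=4.\]
Here $c=4$ is the number of cusps and $n=3$. The normalization is to be checked, using that the restriction of $\tfrac{n+1}{2}$ times the canonical ball metric to a geodesic disc is $\tfrac{n+1}{2}$ times $K_{\B^1}$. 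I also need $D\cdot C$. Each cusp of $C^\circ$ meets $D$ transversally with multiplicity one, so $D\cdot C=4$, and hence $K_{\overline X}\cdot C=0$. This forces a specific choice: I must choose the level so that the number of cusps equals $\tfrac{n+1}{2}(2g-2+c)$, which holds precisely because $g=0$, $c=4$ and $n=3$. For general $n$ the same curves still exist, giving the later statement, but $K\cdot C>0$ there. Since $K_{\overline X}\cdot C=0$ with $C$ a curve, $K_{\overline X}$ is not ample.

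For infinitely many noncommensurable examples, I vary $d$. Picard modular groups over distinct imaginary quadratic fields are noncommensurable, because the field $K$ is a commensurability invariant, namely the trace field. I need infinitely many $d$ with neat level-$3$-type subgroups containing the copy of $\Gmod$, for example all $d$ with $3$ split or inert suitably. I expect the main obstacle to be the local analysis at the cusps: showing that $C$ meets $D$ transversally in exactly one point per cusp of $C^\circ$, with intersection multiplicity $1$. This requires matching the width of the cusp of $\Gmod$, which is $3$, with the lattice defining the toroidal chart. My first attempt will be to compute the unipotent stabilizer centre of $\Gamma$ explicitly and check that its generator restricts to the generator of the parabolic subgroup of $\Gmod$.
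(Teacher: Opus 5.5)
Your strategy matches the paper's. You embed a totally geodesic $Y(3)$ in a level-$3$ Picard modular threefold and use Chern--Weil and Gauss--Bonnet to get $(K_{\overline X}+D)\cdot C=4$. Transversality gives $D\cdot C=4$, and the field $K$ serves as the commensurability invariant; the paper proves this last point via Borel density, showing that the $\Q$-span of any finite-index subgroup is $\Mat_4(K)$, whose centre is $K$.

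\textbf{The gap.} The genuine gap is the assertion $C^\circ=\Gmod\backslash\HH$. You only arrange that the sub-ball stabilizer is commensurable with, or contained in, a congruence subgroup of $\SL_2(\Z)$, but you need it to equal $\Gmod$. For neat $\Gamma$, every cusp of such a curve is automatically transverse of multiplicity one, so
\[
K_{\overline X}\cdot C=2(2g-2+c)-c=4g-4+c .
\]
This vanishes only for $g=0$, $c=4$. A proper finite-index subgroup of $\Gmod$ always gives a positive degree: vanishing would force all its cusps to have width $3$, so it would contain every parabolic of $\Gmod$, and these generate $\Gmod$.

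Whether the block group $\SU(h_0,\OO_K)$ is $\SL_2(\Z)$ depends on $h_0$, not on the level. For example, take $K=\Q(i)$ and $h_0=\diag(1,-1)$. The Cayley transform identifies $\SU(h_0,\Z[i])$ with the theta group $\Gamma_\theta=\{\alpha\equiv\delta,\ \beta\equiv\gamma\bmod 2\}$, which has index $3$ in $\SL_2(\Z)$. The level-$3$ curve is then $(\Gamma_\theta\cap\Gmod)\backslash\HH$, with $\chi=-6$ and eight cusps, so $K_{\overline X}\cdot C=12-8=4$.

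\textbf{The fix.} The missing idea is the paper's choice $h_0=J=\begin{pmatrix}0&i\sqrt p\\-i\sqrt p&0\end{pmatrix}$. Since $J$ is $i\sqrt p$ times the standard symplectic form, the identities $A^tJA=(\det A)J$ and $A^*JA=J$ force $\overline A=A$. Hence $\SU(J,\OO_K)=\SL_2(\OO_K\cap\R)=\SL_2(\Z)$. Neatness kills the definite block and forces $\det A=1$, and $3\OO_K\cap\Z=3\Z$; together these give $\Gamma_W\cong\Gmod$ exactly. With this choice your plan goes through.

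\textbf{Two smaller points.}
\begin{enumerate}
\item What you call the main obstacle is automatic. $\Gamma_W\cap\Gamma_\infty$ consists of the Heisenberg translations preserving $\{\zeta=0\}$, which are exactly the vertical ones in $\Gamma_\infty\cap V_\infty$. So the ambient boundary coordinate $q$ restricts to the curve's cusp parameter; the paper confirms this by computing $t_\infty=6\sqrt p$.
\item To conclude $C\simeq\PP^1$ you must also show that distinct cusps of $C^\circ$ go to distinct points of $D$. The paper does this by reducing $e_1,e_2,e_1\pm e_2$ modulo $3\OO_K$, which even places them on distinct boundary components. Alternatively, two branches through one boundary point would be the same vertical germ $\{\zeta=\zeta_0\}$, contradicting interior injectivity.
\end{enumerate}
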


Together, Theorem~\ref{thm:ample canonical} and Theorem~\ref{thm:threefolds} answer the question of Di Cerbo and Di Cerbo \cite[Question~1.8]{DiCerboDiCerbo} affirmatively and resolve the question of Bakker and Tsimerman \cite[Question after Corollary~D]{BakkerTsimerman} on ampleness of the canonical bundle completely.

More generally, our main result constructs rational curves meeting the interior in every dimension $n\geq2$:

\begin{maintheorem}\label{thm:all-dimensions}
For every integer $n\geq2$, there exist infinitely many pairwise noncommensurable noncompact Picard modular varieties $X$ of dimension $n$ whose smooth toroidal compactifications $\overline{X}$ contain rational curves $C$ not contained in the boundary $D=\overline{X}\setminus X$.
Moreover, each $C$ can be chosen smooth so that it meets $D$ transversely in four points on four distinct boundary components, $C\cap X$ is totally geodesic, and $K_{\overline{X}}\cdot C=n-3$.
\end{maintheorem}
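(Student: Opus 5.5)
The plan is to produce $C$ as the compactified image of a totally geodesic complex disc $\B^1\subset\B^n$, taken to be the fixed locus of a sublattice splitting. Concretely, fix an imaginary quadratic field $K=\Q(\sqrt{-d})$ with ring of integers $\OO_K$. Take a Hermitian lattice of signature $(n,1)$ of the form $L=L_1\oplus L_2$. Here $L_1$ is a hyperbolic plane of signature $(1,1)$ over $\OO_K$, and $L_2$ is positive definite of rank $n-1$. The subball $\B^1=\PP(L_1\otimes\C)\cap\B^n$ is then stabilized by $\U(L_1)\times\U(L_2)\subset \U(L)$.

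Let $\Gamma\subset\U(L)$ be a principal congruence subgroup of level $N\geq3$; it is neat. The stabilizer of $\B^1$ in $\Gamma$ then acts on $\B^1$ through an arithmetic Fuchsian group commensurable with $\SL_2(\Z)$. For a suitable choice ($N=3$ is the model case, using $\Gmod$) it contains the principal congruence subgroup of $\SL_2(\Z)$ of level $N$. When $N=3$ this gives $\Gmod\backslash\HH$, a sphere with exactly four cusps. Infinitely many noncommensurable examples come from varying $d$, since the field $K$ is a commensurability invariant of Picard modular groups.

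The key steps, in order:
\begin{enumerate}
\item Show that the map $\Gamma_{\B^1}\backslash\B^1\to X$ is an embedding. This means that distinct $\Gamma$-translates of $\B^1$ are disjoint or equal, and that $\Gamma_{\B^1}$ acts through exactly $\Gmod$. I would check this by a level-$N$ congruence argument: an element carrying $L_1$ to a lattice meeting it is forced to preserve the splitting modulo $N$.
\item Show that the closure of the curve in $\overline X$ is smooth. Via Mumford's toroidal construction, the closure of the modular curve passes through the four cusps of $\Gamma$ corresponding to the four isotropic lines in $L_1$. The toroidal boundary divisor over a cusp is an abelian variety fibration, and the disc meets it transversely in one point.
\item Show that the four isotropic lines are $\Gamma$-inequivalent. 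Then the four points lie on four distinct boundary components. This again follows from a congruence computation modulo $N$.
\item Compute the degree. Since $C\cap X$ is totally geodesic, with curvature normalizations one has $K_{\overline X}=\frac{n+1}{2}\omega+D$ on the log part. Equivalently, $K_{\overline X}+D$ restricted to $C$ equals $\frac{n+1}{2}$ times $K_C+D_C$, where $D_C$ denotes the four cusp points. This is the usual formula for totally geodesic curves in ball quotients, and it gives $(K_{\overline X}+D)\cdot C=\frac{n+1}{2}(-2+4)=n+1$. Transversality gives $D\cdot C=4$, so $K_{\overline X}\cdot C=n-3$.
\end{enumerate}

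The main obstacle is the second step. The difficulty is verifying that the closure is smooth at the cusps and meets $D$ transversely, and that the log canonical restriction formula holds with no correction at the boundary. I would first work in Siegel domain coordinates near a cusp of $L_1$. There the disc is a horizontal slice $\{w=0\}$, and its image in the partial quotient by the unipotent radical is a punctured disc whose closure meets the zero section of the line bundle over the abelian variety transversely. Neatness of $\Gamma$ ensures the partial quotient is exactly this bundle, with no orbifold points.
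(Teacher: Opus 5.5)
\textbf{The gap: the integral structure of $L_1$.} Steps 2--4 of your outline follow the paper's route: Siegel coordinates at the cusps, reduction modulo $3$ to separate the four cusp lines, and Mumford's good metrics with Gauss--Bonnet. Step 1, however, rests on an unspecified ``suitable choice'' of $L_1$, and this is exactly where the construction fails. Whether $\Gamma_{\B^1}$ acts through $\Gmod$ is not a question of the congruence level; it depends on the Gram matrix of $L_1$.

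Take the standard hyperbolic plane $\begin{pmatrix}0&1\\1&0\end{pmatrix}$ over $K=\Q(\sqrt{-d})$. Conjugating by $\diag(1,i)$ and using $\OO_K\cap i\R=\Z\sqrt{-d}$, one finds that $\SU(L_1,\OO_K)$ consists of the matrices $\begin{pmatrix}a&-ib\\ ic&e\end{pmatrix}$ with $a,e\in\Z$, $b,c\in\sqrt d\,\Z$ and $ae-bc=1$. This group is conjugate in $\SL_2(\R)$ to $\Gamma^0(d)$, the matrices with upper right entry divisible by $d$. Its level-$3$ subgroup corresponds to $\Gamma^0(d)\cap\Gmod$, which has index $d\prod_{\ell\mid d}(1+\ell^{-1})$ in $\Gmod$ when $3\nmid d$. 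For $d=5$ the quotient has eight cusps and genus $3$, and the genus grows with $d$. So with this $L_1$ the curve is not rational, $D\cdot C\neq4$, and the degree formula changes. Your phrase ``contains the level-$N$ subgroup'' has the inclusion backwards here, and containment alone would not give exactly four cusps in any case.

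Since the theorem requires the field to vary, you need a binary form whose integral special unitary group is $\SL_2(\Z)$ for every $K$. The paper takes $J=\begin{pmatrix}0&\sqrt{-d}\\-\sqrt{-d}&0\end{pmatrix}$ with $d=p$. For $\det A=1$ one always has $A^tJA=J$, and together with $A^*JA=J$ this forces $\overline A=A$. Hence $\SU(J,\OO_K)=\SL_2(\OO_K\cap\R)=\SL_2(\Z)$, and $3\OO_K\cap\Z=3\Z$ makes the level-$3$ subgroup exactly $\Gmod$. Neatness then kills the $\U(L_2)$ block, and the determinant-one condition gives $\det A=1$ on the $L_1$ block.

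\textbf{Smaller points.} The ``congruence argument'' you propose for injectivity does not work as stated. Every $\gamma\in\Gamma$ preserves the splitting modulo $N$, and this says nothing about $\gamma(L_1\otimes\C)$. The paper instead uses the involution $r=\diag(-I_2,I_{n-1})$, which normalizes $\Gamma$ and fixes $\B^1$ pointwise. If $\gamma x=y$ with $x,y\in\B^1$, then $\gamma^{-1}r\gamma r^{-1}\in\Gamma$ fixes $x$, so it is trivial by torsion-freeness. Thus $\gamma$ commutes with $r$ and preserves $L_1\otimes\C$.

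For the multiplicity-one claim in step 2, you should say why the ambient boundary coordinate restricts to the curve's cusp coordinate rather than to a power of it. The reason is that the Heisenberg translations preserving the slice $\{w=0\}$ are exactly the vertical ones. So the curve's cusp group is the full lattice of vertical translations in the ambient cusp group, which is what the paper verifies explicitly ($t_\infty=6\sqrt p$ against $\tau\mapsto\tau+3$).

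In step 4, the identity should read $K_{\overline X}+D=\frac{n+1}{4\pi}[\omega]$, not $K_{\overline X}=\frac{n+1}{2}\omega+D$. Finally, the commensurability invariance of $K$, which you quote, is proved in the paper via $\Span_\Q(\Gamma')=\Mat_{n+1}(K)$ and its centre.
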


For a complex projective variety $Y$, its \emph{algebraic exceptional locus} $\operatorname{Exc}_{\mathrm{alg}}(Y)$ and \emph{analytic exceptional locus} $\operatorname{Exc}_{\mathrm{ent}}(Y)$ are the Zariski closures of the unions of, respectively, its positive-dimensional subvarieties not of general type and the images of nonconstant holomorphic maps $\C\to Y$. For $Y$ of general type, the Lang conjecture and the Green--Griffiths--Lang conjecture predict that the respective loci are proper.

 The ampleness result of Bakker and Tsimerman follows from the estimate that $K_{\overline{X}}+D-rD$ is ample for $0<r<(n+1)d/(4\pi)$, where $d$ is the uniform cusp depth of $X$, together with the bound $d\geq2$ \cite[Proposition~3.6 and Corollaries~3.8--3.9]{BakkerTsimerman}.

If $d>4\pi$, the analytic exceptional locus $\operatorname{Exc}_{\mathrm{ent}}(\overline{X})$ is contained in $D$ by the preceding ampleness estimate and Nadel's theorem \cite[Theorem~2.1]{Nadel}. Under the same depth condition, the algebraic exceptional locus $\operatorname{Exc}_{\mathrm{alg}}(\overline{X})$ is also contained in $D$. This follows from the first author's ampleness theorem for $\Omega^1_{\overline{X}}(\log D)\langle-rD\rangle$, for rational $0<r<d/(4\pi)$, which implies that every positive-dimensional subvariety meeting $X$ is of general type \cite[Theorem~A and Corollary~F]{MemarianCotangent}.

The rational curves of Theorem~\ref{thm:all-dimensions} lie in both exceptional loci and are not contained in $D$. Therefore a depth condition cannot be omitted in any dimension. Although the range of ample twists in Bakker--Tsimerman's estimate grows with $n$, increasing the dimension alone does not confine either locus to $D$, even when $K_{\overline{X}}$ is ample. Cadorel \cite[Corollary~1.2]{Cadorel} proved that subvarieties of dimension at least $6$ in $\overline{X}$, not contained in $D$, are of general type. The rational curves in our examples show that the same conclusion does not hold for curves in $\overline{X}$.

\subsection*{Strategy of proof}
\phantomsection
\addcontentsline{toc}{subsection}{Strategy of proof}
To prove our main result, Theorem~\ref{thm:all-dimensions},
we use level-$3$ principal congruence subgroups of unitary groups
over a family of imaginary quadratic fields. The Hermitian form in each dimension contains a hyperbolic plane whose integral special unitary group is $\SL_2(\Z)$. An involution fixing this plane gives an embedded totally geodesic modular curve, which we identify with the modular curve $Y(3)$, the quotient of the upper half-plane by the principal congruence subgroup of level $3$ in $\SL_2(\Z)$. Its compactification is $\PP^1$ with four cusps. At each cusp, the curve and ambient quotient have the same shortest positive vertical translation, so the curve extends transversely to the toroidal boundary with multiplicity one. Reduction modulo $3$ distinguishes its four boundary components.

The Euler characteristic $\chi(Y(3))=-2$ determines the logarithmic canonical degree:
\[
 (K_{\overline{X}}+D)\cdot C
 =-\frac{n+1}{2}\chi(Y(3))=n+1.
\]
Subtracting $D\cdot C=4$ gives $K_{\overline{X}}\cdot C=n-3$. The remaining issue is to distinguish the quotients as the field varies. For $n\geq2$, the rational matrix algebra spanned by any finite-index subgroup of the lattice is the full matrix algebra over its defining field. The centre of this algebra recovers that field and is preserved by commensurability.

\subsection*{Organization of the paper}
\phantomsection
\addcontentsline{toc}{subsection}{Organization of the paper}

In Section~\ref{sec:Ampleness}, we prove
Theorem~\ref{thm:ample canonical}.
In Sections~\ref{sec:arithmetic}--\ref{sec:degrees}, we establish
the geometric properties of the Picard modular varieties used
in our construction.
In Section~\ref{sec:commensurability}, we prove pairwise
noncommensurability and complete the proof of
Theorem~\ref{thm:all-dimensions}.
Taking $n=3$, we obtain Theorem~\ref{thm:threefolds}, since
an ample divisor has positive degree on every projective curve.

\paragraph{Acknowledgements.}
S.M. carried out portions of this work while attending the International Congress of Mathematicians 2026, supported by an American Mathematical Society travel grant under NSF Grant No.~2529637. This material is based upon work supported by the National Science Foundation under Grant No.~2529637.

\paragraph{AI disclosure.}
GPT-5.6 Sol was used to produce
Figure~\ref{fig:cusp-identification-pictorial} and to assist
with language editing and polishing the text.

\section*{Notation and conventions}
\phantomsection
\addcontentsline{toc}{section}{Notation and conventions}

For a field $F$, we denote by $\Mat_m(F)$ the $F$-vector space of $m\times m$ matrices with entries in $F$. It is also an associative $F$-algebra under matrix multiplication. We write $\GL_m(F)$ for its group of invertible matrices and $\SL_m(F)$ for the subgroup of determinant one. The notation $\Span_F(S)$ means the vector space of finite $F$-linear combinations of elements of a set $S$.

The vectors $e_1,\ldots,e_m$ denote the standard basis of $F^m$. We use the same basis after extending scalars to $\C$. We write $I_m$ for the identity matrix and $E_{jk}$ for the matrix with entry $1$ in position $(j,k)$ and zero elsewhere. For a complex matrix $A$, its Hermitian transpose is $A^*={}^t\overline A$.

For a nondegenerate Hermitian matrix $G\in\Mat_m(\C)$, the corresponding Hermitian form is $h_G(v,w)=w^*Gv$. Its unitary, special unitary, and projective unitary groups are
\[
\begin{aligned}
\U(G)&=\{A\in\GL_m(\C):A^*GA=G\},\\
\SU(G)&=\{A\in\U(G):\det A=1\},\\
\PU(G)&=\U(G)/\{\lambda I_m:|\lambda|=1\}.
\end{aligned}
\]
If $K$ is an imaginary quadratic field with ring of integers $\OO_K$, then $\GL_m(\OO_K)$ consists of the matrices whose entries and inverse entries lie in $\OO_K$, and $\SL_m(\OO_K)$ is its determinant-one subgroup. We write $\U(G,\OO_K)=\U(G)\cap\GL_m(\OO_K)$ and $\SU(G,\OO_K)=\SU(G)\cap\SL_m(\OO_K)$. For $G=\diag(1,-1,\ldots,-1)$ of size $n+1$, the corresponding groups are denoted by $\U(1,n)$, $\SU(1,n)$, and $\PU(1,n)$. The group $\PU(1,n)$ acts faithfully on the positive complex lines and is the holomorphic isometry group of the unit ball $\B^n=\{z\in\C^n:\sum_{j=1}^n|z_j|^2<1\}.$ The map $\SU(1,n)\to\PU(1,n)$ has kernel $\{\lambda I_{n+1}:\lambda^{n+1}=1\}$. These conventions agree with the unitary-group description in \cite[\S 2.2]{MemarianVolumes}, with the positive-line convention for signature $(1,n)$ used here. We write $\chi$ for the topological Euler characteristic. 

A matrix group is \emph{neat} if, for every element, the multiplicative subgroup of $\C^\times$ generated by its eigenvalues contains no nonidentity root of unity. In particular, a neat group is torsion-free. The fields, lattices, and cusp coordinates used in the construction are introduced in the relevant sections.

\section{Ampleness of the canonical bundle}\label{sec:Ampleness}

For a smooth projective variety $Y$, we write $N^1(Y)_{\mathbb R}$
for its real N\'eron--Severi space, $N_1(Y)_{\mathbb R}$ for the
dual space of numerical $1$-cycle classes, and
$\overline{\operatorname{NE}}(Y)$ for the closed cone generated
by effective curves; see \cite[\S\S1.1.C and 1.4.C]{Lazarsfeld}.

\begin{lemma}\label{lem:canonical-rational-curve}
Let $Y$ be a smooth projective variety over $\mathbb C$ with $K_Y$
nef and big. If $K_Y$ is not ample, then $Y$ contains an integral
rational curve $C$ with $K_Y\cdot C=0$.
\end{lemma}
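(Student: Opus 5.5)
The plan is to combine the basepoint-free theorem with the cone theorem, or more directly with Kawamata's result on the extremal rays of $K$-trivial faces. Since $K_Y$ is nef and big, the basepoint-free theorem (Kawamata--Shokurov) shows that $mK_Y$ is globally generated for $m\gg0$. It therefore defines a birational morphism $\varphi\colon Y\to Y_{\mathrm{can}}=\operatorname{Proj}\bigoplus_m H^0(Y,mK_Y)$ onto the canonical model, with $mK_Y=\varphi^*\mathcal O(1)$. If $K_Y$ is not ample, then $\varphi$ is not finite. Otherwise $K_Y$ would be the pullback of an ample bundle under a finite morphism, and hence ample. So $\varphi$ contracts some curve, i.e.\ its exceptional locus $\operatorname{Exc}(\varphi)$ is nonempty, and every curve contracted by $\varphi$ has $K_Y$-degree zero.

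The key step is to produce a \emph{rational} curve inside a fibre of $\varphi$. For this I would invoke Kawamata's theorem on the length of extremal rays in its relative form: for a birational projective morphism $\varphi\colon Y\to Z$ from a smooth (or klt) variety, every irreducible component of $\operatorname{Exc}(\varphi)$ is covered by rational curves contracted by $\varphi$. This is Kawamata, ``On the length of an extremal ray'', Theorem~1, stated there for a relative setting where $-K_Y$ is $\varphi$-nef. In our case $K_Y\equiv_\varphi 0$, so $-K_Y$ is $\varphi$-nef and the theorem applies.

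An alternative route uses the cone theorem relative to $\varphi$. Since $K_Y$ is $\varphi$-trivial, for a small ample $H$ the divisor $K_Y+\epsilon H$ is not $\varphi$-nef on the contracted curves. Hence $\overline{\operatorname{NE}}(Y/Z)$ contains a $(K_Y+\epsilon H)$-negative extremal ray spanned by a rational curve. However, that curve satisfies $K_Y\cdot C+\epsilon H\cdot C<0$, which only yields $K_Y\cdot C\le0$, and hence $=0$ because $K_Y$ is nef. The point to check is that the rational curve is $\varphi$-contracted, which holds because it spans a ray in the relative cone. This is a fine alternative: the relative cone theorem for $(Y,\epsilon\Delta)$, writing $\epsilon H\sim_{\mathbb Q}$ an effective klt boundary, gives rational curves generating relative extremal rays.

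Given either route, take an integral rational curve $C$ contracted by $\varphi$. Then $K_Y\cdot C=\frac1m\,\mathcal O(1)\cdot\varphi_*C=0$. The main obstacle is simply citing the correct form of the rationality statement: the absolute cone theorem only gives rational curves on $K_Y$-negative rays, and here there are none. The relative (Kawamata) version is therefore essential, and I would cite it precisely rather than reprove it.
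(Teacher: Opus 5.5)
Your main route is correct but differs from the paper's. You build the canonical morphism $\varphi\colon Y\to Y_{\mathrm{can}}$ via the basepoint-free theorem, note that non-ampleness forces $\varphi$ to contract a curve, and then invoke Kawamata's covering theorem for the exceptional locus of $\varphi$.

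The paper never constructs $\varphi$. Kleiman's criterion gives a nonzero class $\alpha\in\overline{\operatorname{NE}}(Y)$ with $K_Y\cdot\alpha=0$. Kodaira's lemma writes $K_Y\sim_{\mathbb Q}A+E$ with $A$ ample and $E$ effective. For small $\varepsilon$ the pair $(Y,\varepsilon E)$ is klt, and $(K_Y+\varepsilon E+\tfrac{\varepsilon}{2}A)\cdot\alpha=-\tfrac{\varepsilon}{2}A\cdot\alpha<0$. So the \emph{absolute} cone theorem for $(Y,\varepsilon E)$ puts a positive coefficient on some rational curve $C_i$, and nefness of $K_Y$ forces $K_Y\cdot C_i=0$.

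The paper's argument needs no basepoint-freeness or canonical model. Yours yields more: every component of the exceptional locus of the canonical map is covered by $K_Y$-trivial rational curves. Both rest on the same deep input, since the rationality statement in the klt cone theorem is Kawamata's.

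One caution on your citation: make sure the version of Kawamata's theorem you quote really allows $-(K_Y+\Delta)$ to be merely $\varphi$-nef. If your source requires $\varphi$-ampleness, you need exactly the paper's perturbation. Take $E'\in|kK_Y-A|$ effective by Kodaira's lemma; then $-(K_Y+\varepsilon E')\equiv_\varphi\varepsilon A$ is $\varphi$-ample, and $(Y,\varepsilon E')$ is klt for small $\varepsilon$.

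Your ``alternative route'', however, fails as written because of a sign error. For $H$ ample and $C$ contracted by $\varphi$, one has $(K_Y+\epsilon H)\cdot C=\epsilon H\cdot C>0$. So $K_Y+\epsilon H$ is ample (nef plus ample), and there are no $(K_Y+\epsilon H)$-negative rays, relative or absolute. Replacing $\epsilon H$ by a $\mathbb Q$-linearly equivalent effective klt boundary changes nothing numerically.

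What you need is a klt boundary that is \emph{negative} on $K_Y$-trivial classes. Bigness of $K_Y$ supplies it: $\varepsilon E$ with $E\equiv K_Y-A$, so that $E\cdot C=-A\cdot C<0$ on such curves. With that boundary the absolute cone theorem already suffices. Your closing claim that the relative version is ``essential'' is therefore incorrect; this is precisely the point the paper's proof exploits.
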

\begin{proof}
    Since $K_Y$ is nef but not ample, Kleiman's ampleness criterion
\cite[Theorem~1.4.29]{Lazarsfeld} gives a nonzero class
$\alpha\in\overline{\operatorname{NE}}(Y)$ such that
\(
K_Y\cdot\alpha=0.
\)
Since $K_Y$ is big, Kodaira's lemma
\cite[Corollary~2.2.7]{Lazarsfeld} gives
\(
K_Y\sim_{\mathbb Q}A+E,
\)
where $A$ is an ample $\mathbb Q$-divisor and $E$ is effective. Choose a sufficiently small rational $\varepsilon>0$ such that
$(Y,\varepsilon E)$ is klt. To see this, take a
log resolution $\pi\colon\widetilde Y\to Y$ and write
\[
K_{\widetilde Y}=\pi^*K_Y+\sum_i a_iF_i,
\qquad
\pi^*E=\sum_i b_iF_i,
\]
where the $F_i$ include the exceptional divisors and the strict
transforms of the components of $E$.
Smoothness of $Y$ gives $a_i\geq0$, and effectiveness of $E$
gives $b_i\geq0$. Choose $\varepsilon>0$ rational so that
$\varepsilon b_i<a_i+1$ for every $i$.
Such a choice is possible because there are only finitely many
inequalities, each with a positive upper bound when $b_i>0$.
Then
\[
K_{\widetilde Y}
=\pi^*(K_Y+\varepsilon E)
+\sum_i(a_i-\varepsilon b_i)F_i,
\]
and every discrepancy is greater than $-1$.
Since the total transform has simple normal crossings support,
this verifies that $(Y,\varepsilon E)$ is klt; see \cite[Definition~2.34]{KollarMori}.

Now set
\(M=K_Y+\varepsilon E+\frac{\varepsilon}{2}A.\)
The equality $K_Y\cdot\alpha=0$ and the decomposition above give
$E\cdot\alpha=-A\cdot\alpha$, so
\[
M\cdot\alpha
=\varepsilon E\cdot\alpha
+\frac{\varepsilon}{2}A\cdot\alpha
=-\frac{\varepsilon}{2}A\cdot\alpha<0,
\]
where the strict inequality follows from ampleness of $A$
and Kleiman's criterion. The cone theorem \cite[Theorem~3.7]{KollarMori}, applied to
$(Y,\varepsilon E)$ with the ample divisor
$\frac{\varepsilon}{2}A$, gives
\[
\alpha=\beta+\sum_{i=1}^{r}a_i[C_i],
\qquad
\beta\in\overline{\operatorname{NE}}(Y),\quad
M\cdot\beta\geq0,\quad a_i\geq0,
\]
where the $C_i$ are integral rational curves.
At least one $a_i$ is positive because $M\cdot\alpha<0$.
As $K_Y$ is nef,
\[
0=K_Y\cdot\alpha
=K_Y\cdot\beta+\sum_{i=1}^{r}a_i(K_Y\cdot C_i)
\]
is a sum of nonnegative terms. Hence $K_Y\cdot C_i=0$
whenever $a_i>0$.
\end{proof}

Now we can prove Theorem \ref{thm:ample canonical}: 
\begin{proof}[Proof of Theorem \ref{thm:ample canonical}]
The canonical divisor $K_{\overline X}$ is nef by \cite[Theorem~1.1]{DiCerboDiCerbo}, and big by
\cite[Theorem~C]{BakkerTsimerman}. If it were not ample,
Lemma~\ref{lem:canonical-rational-curve} would give an integral
rational curve $C\subset\overline X$ with
$K_{\overline X}\cdot C=0$. This contradicts
\cite[Lemma~4.7]{BakkerTsimerman}, since $n\geq4$.
\end{proof}
\section{The arithmetic lattices and the modular curve}\label{sec:arithmetic}

In this section we choose the imaginary quadratic fields and construct a totally geodesic modular curve in the associated Picard modular varieties. The choice of Hermitian form keeps the integral group acting on the curve equal to $\SL_2(\Z)$ as the field varies.

Fix a dimension $n\geq2$. Let $L$ be an imaginary quadratic field with ring of integers $\OO_L$, and let $G$ be a Hermitian form on $L^{n+1}$ of signature $(1,n)$. The space of positive complex lines for $G$ is biholomorphic to the unit ball $\B^n$. A quotient of this ball by a lattice commensurable, after projectivization and conjugation, with $\SU(G,\OO_L)$ is called a \emph{Picard modular variety}. We choose the defining fields as follows.

Fix a prime $p\equiv1\pmod{12}$. Let $K=\Q(i\sqrt p)$. Since $p\equiv1\pmod4$, its ring of integers is $\OO_K=\Z[i\sqrt p]$ and the discriminant of $K$ is $-4p.$ Consider the Hermitian form on $K^{n+1}$ given by
\begin{equation}\label{eq:hermitian-form}
J=\begin{pmatrix}0&i\sqrt p\\-i\sqrt p&0\end{pmatrix},
\qquad H=J\oplus(-I_{n-1}).
\end{equation}
Since $J$ has eigenvalues $\pm\sqrt p$, $H$ has signature $(1,n)$.
It is isotropic over $K$ because $e_1$ is isotropic.

Let $\B_H$ be the space of positive complex lines $[v]\in\PP^n(\C)$ satisfying $v^*Hv>0$. A complex linear change of coordinates puts $H$ in the form $|v_0|^2-\sum_{j=1}^n|v_j|^2$. The ratios $v_j/v_0$ then identify $\B_H$ with the unit ball $\B^n$. Define
\begin{equation}\label{eq:lattice}
\Gamma=\{\gamma\in\SL_{n+1}(\OO_K):
\gamma^*H\gamma=H,\ \gamma\equiv I_{n+1}\pmod{3\OO_K}\}.
\end{equation}
By the arithmetic lattice theorem \cite{BorelHarishChandra}, $\SU(H,\OO_K)$ is a lattice in $\SU(H)\simeq\SU(1,n)$. Its isotropy over $K$ makes it nonuniform. The finite-index subgroup $\Gamma$ is neat by \cite[Lemma~1.4]{DimitrovRamakrishnan}; see also the original
\cite[Lemma~4.3]{Hol80}. Its projective action is faithful and free, and its cusp stabilizers are unipotent. Hence
\[
X=\Gamma\backslash\B_H
\]
is a smooth noncompact Picard modular variety with a smooth projective toroidal compactification $\overline X$ and boundary $D=\overline X\setminus X$.

Let $W=\Span_\C\{e_1,e_2\}$. The restriction of $H$ to $W$ is represented by $J$ and has signature $(1,1)$, so the positive lines in $W$ form a copy $\B_W\simeq\HH$ of the upper half-plane. Consider the integral special unitary involution
\begin{equation}\label{eq:involution}
 r=\diag(-I_2,I_{n-1})\in\SU(H,\OO_K).
\end{equation}
 Conjugation by $r$ preserves the congruence condition defining $\Gamma$, which gives $r\Gamma r^{-1}=\Gamma$. The fixed-point locus of $r$ in $\B_H$ is exactly $\B_W$. Since $r$ acts as an isometry, this fixed-point locus is totally geodesic.

\begin{lemma}\label{lem:stabilizer}
The stabilizer $\Gamma_W=\{\gamma\in\Gamma:\gamma W=W\}$ acts faithfully on $\B_W$, and its quotient is the modular curve $Y(3)=\Gmod\backslash\HH$, where
\[
\Gmod=\ker\bigl(\SL_2(\Z)\longrightarrow\SL_2(\Z/3\Z)\bigr).
\]
\end{lemma}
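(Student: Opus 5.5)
We will compute $\Gamma_W$ explicitly and then identify its image in the isometry group of $\B_W$.

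\textbf{Step 1: block form.} Take $\gamma\in\Gamma$ with $\gamma W=W$. Since $\gamma$ preserves $H$, it also preserves $W^\perp=\Span_\C\{e_3,\ldots,e_{n+1}\}$. Hence $\gamma=\diag(A,B)$ with:
\begin{itemize}
\item $A\in\Mat_2(\OO_K)$, $A^*JA=J$;
\item $B\in\Mat_{n-1}(\OO_K)$, $B^*B=I_{n-1}$;
\item $\det A\det B=1$, and $A\equiv I_2$, $B\equiv I_{n-1}\pmod 3$.
\end{itemize}

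\textbf{Step 2: the block $B$ is trivial.} Since $B^*B=I$ with integral entries, each column of $B$ has entries in $\OO_K=\Z[i\sqrt p]$ with $\sum|b_{jk}|^2=1$. Because $p>1$, every nonzero element of $\OO_K$ has norm at least $1$, with equality only for $\pm1$. So each column has exactly one entry, equal to $\pm1$, and $B$ is a signed permutation matrix. The congruence $B\equiv I\pmod 3$ then forces $B=I_{n-1}$, since $-1\not\equiv1$ and $0\not\equiv1$ modulo $3$. Consequently $\det A=1$.

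\textbf{Step 3: $\SU(J,\OO_K)=\SL_2(\Z)$.} This is the key computation. Write $J=i\sqrt p\,S$ with $S=\begin{pmatrix}0&1\\-1&0\end{pmatrix}$. The condition $A^*JA=J$ becomes $A^*SA=S$. For $\det A=1$ we have $A^{t}SA=S$, so the condition is equivalent to $\overline A^{t}S=A^tS$, that is, $\overline A=A$. Thus $A$ has entries in $\OO_K\cap\R=\Z$, and $\SU(J,\OO_K)=\SL_2(\Z)$.

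The congruence $A\equiv I\pmod{3\OO_K}$ for an integer matrix is equivalent to $A\equiv I\pmod 3$, because $3\OO_K\cap\Z=3\Z$. Hence $\Gamma_W=\{\diag(A,I_{n-1}):A\in\Gmod\}$.

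\textbf{Step 4: the action on $\B_W$.} The form $h_J(v,v)=i\sqrt p(\bar v_1v_2-\bar v_2v_1)$ equals $2\sqrt p\,\mathrm{Im}(v_1\bar v_2)$ up to sign. Choose the coordinate $\tau=v_1/v_2$ or its conjugate, according to the sign, so that $\B_W$ is the upper half-plane. On these positive lines, a real matrix $A$ acts by the Möbius transformation $\tau\mapsto(a\tau+b)/(c\tau+d)$. If the sign forces the conjugate coordinate, we compose with a fixed conjugation by $\diag(1,-1)$, which preserves $\Gmod$. In either case the action factors through $\Gmod/\{\pm I\}$.

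\textbf{Step 5: faithfulness.} We have $-I\notin\Gmod$, since $-1\not\equiv1\pmod3$. So $\Gmod$ embeds in $\PSL_2(\R)$, and the action on $\B_W$ is faithful. The quotient is therefore $\Gmod\backslash\HH=Y(3)$.

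The main obstacle is Step 3, together with the sign bookkeeping for the upper half-plane identification in Step 4. The remaining steps are routine.
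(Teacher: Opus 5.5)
Your proof is correct, and its overall structure matches the paper's. Both arguments decompose along $W\oplus W^\perp$, use $A^tJA=(\det A)J$ to force $A$ to be real so that $\SU(J,\OO_K)=\SL_2(\Z)$, use $3\OO_K\cap\Z=3\Z$ to match the level-$3$ conditions, and exclude $-I_2$ by the congruence to get faithfulness.

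The one genuine difference is your Step 2. The paper argues structurally:
\begin{itemize}
\item $\U(I_{n-1},\OO_K)$ is finite, being a compact group intersected with a discrete set, so the eigenvalues of $B$ are roots of unity;
\item these are also eigenvalues of $\diag(A,B)\in\Gamma$, so neatness of $\Gamma$ forces them all to equal $1$;
\item a finite-order unipotent matrix is the identity.
\end{itemize}
You instead use the norm form $a^2+pb^2$ on $\Z[i\sqrt p]$. It shows that each column of $B$ is a $\pm$ standard basis vector, so $B$ is a signed permutation matrix. The congruence $B\equiv I_{n-1}\pmod{3\OO_K}$ then leaves only $I_{n-1}$.

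Your route is self-contained and does not depend on the cited neatness lemma, at the cost of using the explicit arithmetic of $\OO_K$. The paper's route is shorter and applies to any neat lattice with a definite complementary block.

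A minor point on Step 4: no sign hedge is needed. In fact $h_J(v,v)=2\sqrt p\,\operatorname{Im}(v_1\overline{v_2})$ exactly, so $\tau=v_1/v_2$, which is the paper's chart $\tau\mapsto[\tau:1:0]$, already identifies $\B_W$ holomorphically with the upper half-plane. Had the sign been negative, $-v_1/v_2$ (with your $\diag(1,-1)$ conjugation) would be the right fix, not the complex conjugate, which would make the identification antiholomorphic.
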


\begin{proof}
An element of $\Gamma_W$ preserves $W\oplus W^\perp$, where orthogonality is with respect to $H$, and has the form $\diag(A,B)$ with $A\in\U(J,\OO_K)$ and $B\in\U(I_{n-1},\OO_K)$. The latter integral unitary group is finite because it is the intersection of a compact matrix group with the discrete set of matrices over $\OO_K$. Hence every eigenvalue of $B$ is a root of unity. These are also eigenvalues of $\diag(A,B)\in\Gamma$, so neatness forces them all to be $1$. A finite-order unipotent matrix is the identity. Therefore $B=I_{n-1}$, and the determinant-one condition gives $\det A=1$.

For this choice of $J$, the identities $A^tJA=(\det A)J$ and $A^*JA=J$ force every element of $\SU(J,\OO_K)$ to have real entries. Since $\OO_K\cap\R=\Z$ and every matrix in $\SL_2(\Z)$ preserves $J$, we have $\SU(J,\OO_K)=\SL_2(\Z)$. Moreover, $3\OO_K\cap\Z=3\Z$, so this identification preserves the principal congruence subgroups of level $3$. Block extension therefore gives the isomorphism
\begin{equation}\label{eq:sl2-identification}
\Phi:\Gmod\xrightarrow{\ \sim\ }\Gamma_W,\qquad
M\longmapsto\diag(M,I_{n-1}).
\end{equation}

Under the identification $\HH\simeq\B_W$ given by $\tau\mapsto[\tau:1:0]$, this isomorphism identifies the two actions by M\"obius transformations. Therefore
\[
\Gamma_W\backslash\B_W\simeq\Gmod\backslash\HH=Y(3).
\]
Finally, an element acting trivially on $\B_W$ must have $A=\pm I_2$. The congruence condition excludes $-I_2$, so the action is faithful.
\end{proof}

\begin{lemma}\label{lem:injectivity}
The natural totally geodesic immersion $Y(3)=\Gamma_W\backslash\B_W\to X$ is injective.
\end{lemma}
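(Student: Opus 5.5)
Injectivity of $\Gamma_W\backslash\B_W\to\Gamma\backslash\B_H$ amounts to the following claim: if $x,y\in\B_W$ and $\gamma\in\Gamma$ satisfy $\gamma x=y$, then there is $\gamma'\in\Gamma_W$ with $\gamma' x=y$. The standard approach is to show that any such $\gamma$ already lies in $\Gamma_W$, using the involution $r$ and neatness.

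Suppose $\gamma x=y$ with $x,y\in\B_W$. Since $r$ fixes $\B_W$ pointwise and $r\Gamma r^{-1}=\Gamma$, the element $\delta=\gamma^{-1}r\gamma r^{-1}$ lies in $\Gamma$. It fixes $x$:
\[
r^{-1}x=x,\qquad \gamma x=y,\qquad r y=y,\qquad \gamma^{-1}y=x .
\]
Now $\Gamma$ is neat, hence torsion-free, and it acts freely on $\B_H$; the paper notes that its projective action is faithful and free. A point stabilizer in $\PU(1,n)$ is compact, so an element of the discrete group $\Gamma$ fixing a point of the ball has finite order in the image. By neatness, this forces $\delta$ to be scalar. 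The only scalars in $\Gamma$ are the identity: a scalar $\lambda I$ in $\Gamma$ has $\lambda$ a root of unity, and neatness gives $\lambda=1$. Thus $\delta=I$, so $\gamma$ commutes with $r$.

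An element commuting with $r=\diag(-I_2,I_{n-1})$ preserves its $-1$-eigenspace, which is exactly $W$. Hence $\gamma\in\Gamma_W$, and $x,y$ have the same image in $\Gamma_W\backslash\B_W$, proving injectivity.

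The main point needing care is the fixed-point argument. A lift $\delta\in\SL_{n+1}$ fixing the positive line $x$ has $x$ as an eigenvector with eigenvalue $\mu$. On $x^\perp$, which is negative definite, $\delta$ acts unitarily, so all its eigenvalues have modulus one. Moreover, $\delta$ lies in the discrete group $\Gamma$ and in the compact stabilizer of $x$ in $\U(H)$, so it has finite order. Its eigenvalues are therefore roots of unity, and neatness forces them all to equal $1$. A finite-order unipotent matrix is the identity, so $\delta=I$.

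This argument replaces any appeal to freeness of the projective action, and it also removes the need to discuss scalars separately, exactly as in the proof of Lemma~\ref{lem:stabilizer}.
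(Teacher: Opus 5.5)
Your proof is correct and follows the paper's argument. You form $\delta=\gamma^{-1}r\gamma r^{-1}\in\Gamma$, note that it fixes $x$, conclude $\delta=1$ from discreteness and neatness, and deduce that $\gamma$ commutes with $r$ and so preserves $W$; your eigenvalue argument just spells out the paper's appeal to finite point stabilizers together with torsion-freeness and faithfulness of the action.
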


\begin{proof}
We must show that any element of $\Gamma$ identifying two points of $\B_W$ already belongs to $\Gamma_W$. Suppose $x,y\in\B_W$ and $y=\gamma x$ for $\gamma\in\Gamma$. Since $r$ fixes both points, the element $\delta=\gamma^{-1}r\gamma r^{-1}$ fixes $x$. It belongs to $\Gamma$, because $r$ normalizes $\Gamma$. The stabilizer of $x$ in a discrete group of ball isometries is finite, while $\Gamma$ is torsion-free and acts faithfully. Hence $\delta=1$, so $\gamma$ commutes with $r$ and preserves $W$. Therefore $\gamma\in\Gamma_W$.
\end{proof}

\section{Extension to the Compactification}\label{sec:compact-curve}

We now extend the embedded modular curve to $\overline X$. To understand the boundary behavior, we identify the cusps with isotropic lines and compare the toroidal coordinates of the curve and ambient quotient. The cusp-neighborhood construction follows Parker \cite{Parker} and Bakker--Tsimerman \cite[\S 2]{BakkerTsimerman}, with the horospherical notation of \cite[\S\S 2.2, 2.5--2.6]{MemarianVolumes}.

Let $X(3)$ denote the smooth projective compactification of $Y(3)$. Its added points, called cusps, correspond to $\Gmod$-orbits in $\PP^1(\Q)=\Q\cup\{\infty\}$, the rational boundary points of $\HH$. The curve $X(3)$ has genus zero and four cusps. Write the added points as $s_1,\ldots,s_4$. In particular, $X(3)\simeq\PP^1$ and
\begin{equation}\label{eq:modular-euler}
\chi(Y(3))=2-2g(X(3))-4=-2.
\end{equation}

\paragraph{Cusps and isotropic lines.}
The ideal boundary of $\B_H$ consists of complex lines $\C a$ with $a\neq0$ and $a^*Ha=0$. Such a line is called \emph{isotropic}. It is \emph{defined over $K$} if it has a generator $a\in K^{n+1}$. For the arithmetic lattice $\Gamma$, its parabolic fixed points are exactly these $K$-defined isotropic lines, and a cusp of $X$ is a $\Gamma$-orbit of such lines; see Looijenga \cite[\S\S 4.1--4.3]{LooijengaBall}. The stabilizer of such a line in the special unitary group is a rational parabolic subgroup. This is the ball case of the Baily--Borel correspondence between rational boundary components and rational parabolic subgroups \cite[Theorem~3.7]{BailyBorel}; the boundary of the quotient is indexed by their $\Gamma$-orbits \cite[Corollary~4.11]{BailyBorel}.

The Baily--Borel compactification adds one point for each cusp, and the toroidal compactification replaces that point by a boundary component of $D.$ Two cusp lines therefore give the same boundary component precisely when they belong to the same $\Gamma$-orbit. For the modular curve, cusps are $\Gamma_W$-orbits of the $K$-defined isotropic lines contained in $W$. Under the coordinate $\tau$, a finite rational boundary point $\tau=a/c$, with $a,c\in\Z$ coprime and $c\neq0$, corresponds to $\C(ae_1+ce_2)$, and $\infty$ corresponds to $\C e_1$. These are all the $K$-defined isotropic lines in $W$, because $K\cap\R=\Q$. The inclusion of the curve sends each such line to the same isotropic line in the ambient ball. This identifies the points $s_j$ with the ambient cusps used below.

\paragraph{Toroidal coordinates at a cusp.}
In the affine chart of positive lines $[\tau:1:\xi]$, with $\tau\in\C$ and $\xi\in\C^{n-1}$, the ball is given by
\(2\sqrt p\operatorname{Im}\tau-\|\xi\|^2>0.\)
Set $\zeta=\xi$ and $z=2\sqrt p\,\tau$. These identify $\B_H$ with the Siegel domain
\[
S=\{(\zeta,z)\in\C^{n-1}\times\C:
\operatorname{Im}z>\|\zeta\|^2\}.
\]
Its horospherical coordinates are $(\zeta,v,u)$, where
\[
v=\operatorname{Re}z,\qquad
u=\operatorname{Im}z-\|\zeta\|^2>0,
\qquad z=v+i(\|\zeta\|^2+u).
\]
The distinguished boundary point $q_\infty$ corresponds to $\C e_1$, and the modular geodesic $\B_W$ is the locus $\zeta=0$. Let $\Gamma_\infty$ be the stabilizer of $q_\infty$ in $\Gamma$. Its elements are unipotent and act by Heisenberg translations
\[
(\eta,t):(\zeta,v,u)\longmapsto
(\zeta+\eta,v+t+2\operatorname{Im}\langle\eta,\zeta\rangle,u),
\]
where $\eta\in\C^{n-1}$, $t\in\R$, and $\langle\eta,\zeta\rangle=\sum_j\eta_j\overline\zeta_j$. The centre of the Heisenberg group is the subgroup $V_\infty$ of \emph{vertical translations} $(0,t)$. Write
\(
\Gamma_\infty\cap V_\infty
=\{(0,kt_\infty):k\in\Z\},
\)
where $t_\infty>0$ is the parameter of the \emph{shortest positive vertical translation}.

For $\tilde u>0$, let $B_\infty(\tilde u)=\{(\zeta,v,u)\in S:u>\tilde u\}$. For sufficiently large $\tilde u$, the quotient $\Gamma_\infty\backslash B_\infty(\tilde u)$ is an embedded cusp neighborhood in $X$; see \cite{Parker} and \cite[\S 2]{BakkerTsimerman}. To compactify this neighborhood, first divide by $\Gamma_\infty\cap V_\infty$. The quotient is described by
\begin{equation}\label{eq:toroidal-coordinate}
(\zeta,z)\longmapsto(\zeta,q),\qquad
q=\exp\!\left(\frac{2\pi iz}{t_\infty}\right),
\end{equation}
with image
\(
\left\{(\zeta,q):
0<|q|<\exp\!\left(-\frac{2\pi}{t_\infty}
(\|\zeta\|^2+\tilde u)\right)\right\}.
\)
Adding $q=0$ and then taking the residual quotient by $\Gamma_\infty/(\Gamma_\infty\cap V_\infty)$ gives the toroidal cusp neighborhood. Because $\Gamma$ is neat, the residual quotient is locally biholomorphic along the added boundary. Hence, after choosing a local lift, $(\zeta,q)$ are holomorphic coordinates in which $D$ is defined by $q=0$.

\begin{lemma}\label{lem:cusp-width}
The immersion $Y(3)\to X$ extends to a morphism $f:X(3)\to\overline{X}$ satisfying
\[
f^*D=s_1+\cdots+s_4.
\]
Locally at every $s_j$, the map is an immersion transverse to $D$.
\end{lemma}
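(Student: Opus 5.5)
Because $\Gamma$, $\Gamma_W$ and the involution $r$ are all invariant under the relevant integral symmetries, it suffices to treat one cusp at a time. I will transport each cusp $s_j$ of $X(3)$ to $\infty$ and then compare the toroidal coordinate of the modular curve with the ambient coordinate \eqref{eq:toroidal-coordinate}.

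\textbf{Reduction to $\infty$.} A cusp $s_j$ is represented by $a/c\in\PP^1(\Q)$. Choose $M\in\SL_2(\Z)$ with $M\infty=a/c$, and set $g=\diag(M,I_{n-1})\in\SU(H,\OO_K)$. Conjugation by $g$ preserves $H$, $W$ and the level-$3$ congruence condition, since $\Gamma$ is normal in $\SU(H,\OO_K)$. Hence $g$ normalizes both $\Gamma$ and $\Gamma_W$, and it carries the cusp at $\infty$, with its horoball and coordinates, to the cusp at $a/c$ compatibly on the curve and on the ball. So it suffices to treat $q_\infty=\C e_1$.

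\textbf{Vertical translation lengths.} On $\B_W$ we have $\zeta=0$ and $z=2\sqrt p\,\tau$. The stabilizer of $\infty$ in $\Gmod$ is generated by $\tau\mapsto\tau+3$, because $-I_2\notin\Gmod$. Under $\Phi$ this is the vertical translation $t=6\sqrt p$. The main step is to show that $t_\infty=6\sqrt p$, i.e.\ that no shorter vertical translation lies in $\Gamma_\infty$.

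A vertical translation in $\SU(H)$ fixing $e_1$ acts trivially on $e_1^\perp/\C e_1$. In the basis $e_1,e_2,\ldots$ it is therefore the unipotent $I+xE_{12}$, and unitarity for $J$ forces $x\in\R$. Integrality gives $x\in\OO_K\cap\R=\Z$, and the congruence condition gives $x\in 3\Z$. Hence every vertical translation in $\Gamma$ already lies in $\Phi(\Gmod)$, and the generator $x=3$ gives $t_\infty=6\sqrt p$.

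To justify the unipotent form, note that a central Heisenberg element $h$ fixes $e_1$ and acts trivially on $e_1^\perp/\C e_1$. It is then determined by $he_2=e_2+xe_1+w$ with $w\in e_1^\perp$. The centrality condition ($\eta=0$) forces $w\in\C e_1$, so $he_2-e_2\in\C e_1$ and $h=I+xE_{12}$. Reality of $x$ then follows from $h^*Hh=H$. I expect this identification of the full centre of $\Gamma_\infty$ with matrices $I+xE_{12}$ to be the main obstacle; the rest is bookkeeping.

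\textbf{Local form of the extension.} Since the two translation lengths agree, the ambient coordinate $q=\exp(2\pi i z/t_\infty)$ restricts on $\zeta=0$ to $\exp(2\pi i\tau/3)$. This is exactly the local uniformizer of $X(3)$ at $\infty$, because the cusp width is $3$. In the coordinates $(\zeta,q)$ of a local lift, the map near $s_j$ is therefore $q_W\mapsto(0,q_W)$. This holomorphic map extends over $q_W=0$, is an immersion there, and meets $D=\{q=0\}$ with multiplicity one, so it is transverse to $D$.

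One more check is needed: the residual quotient by $\Gamma_\infty/(\Gamma_\infty\cap V_\infty)$ is locally biholomorphic. This is already stated in the text and uses neatness, so the local chart is valid and the pulled-back equation of $D$ is $q_W$.

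\textbf{Global conclusion.} Away from the cusps the image of $Y(3)$ lies in $X$ and does not meet $D$. The local extensions glue with the immersion of Lemma~\ref{lem:injectivity} to give a morphism $f\colon X(3)\to\overline X$. The local computation shows that $f^*D$ has multiplicity exactly one at each $s_j$, so $f^*D=s_1+\cdots+s_4$.
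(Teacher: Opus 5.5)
Your proposal is correct and follows the paper's proof essentially step for step. You reduce to $q_\infty=\C e_1$ via $\diag(M,I_{n-1})$, show that the shortest vertical translation in $\Gamma$ is $I_{n+1}+3E_{12}$ so that $t_\infty=6\sqrt p$ and $q$ restricts to the curve's uniformizer $\exp(2\pi i\tau/3)$, and read off the transverse simple zero; your justification that central elements of $\Gamma_\infty$ have the form $I_{n+1}+xE_{12}$ with $x$ real is a detail the paper only asserts (there, unitarity also kills the $e_1$-components of $he_j$ for $j\geq3$), and, as the paper does, you should note that the glued holomorphic map is algebraic by GAGA.
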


\begin{proof}
The boundary of the toroidal compactification of $X$ is locally defined by $q=0$. We will show that
$q$ restricts to the coordinate used to fill the curve cusp.
This will extend the map and show that the boundary equation
has a simple zero on the completed curve.

At $q_\infty$, the curve is given by $\zeta=0$ and
$z=2\sqrt p\,\tau$. By Lemma~\ref{lem:stabilizer}, its cusp
stabilizer acts by $\tau\mapsto\tau+3k$, with $k\in\Z$.
The quotient near the cusp is therefore a punctured disk
with coordinate
\(
\widetilde q=\exp\!\left(\frac{2\pi i\tau}{3}\right),
\)
and adding the cusp to $X(3)$ means filling this disk at
$\widetilde q=0$.

We now express the ambient boundary coordinate $q$ in terms
of $\widetilde q$. A vertical translation $z\mapsto z+t$
is represented by $I_{n+1}+\frac{t}{2\sqrt p}E_{12}$.
Since $\OO_K\cap\R=\Z$, this matrix belongs to $\Gamma$
precisely when $t/(2\sqrt p)\in3\Z$.
The shortest positive ambient period is therefore
$t_\infty=6\sqrt p$, and on the curve we have
\[
q=\exp\!\left(\frac{2\pi iz}{6\sqrt p}\right)
=\exp\!\left(\frac{2\pi i\tau}{3}\right)=\widetilde q.
\]
Consequently, in the ambient coordinates $(\zeta,q)$,
the inclusion is
$\widetilde q\mapsto(0,\widetilde q)$.
It extends across $\widetilde q=0$ by sending the added
cusp to $(0,0)$. Its differential is $(0,1)$, and the
boundary equation pulls back to $\widetilde q$.
Hence the extension is an immersion transverse to $D$,
with intersection multiplicity one.

For another cusp, choose $g\in\SL_2(\Z)$ carrying $\C e_1$
to its rational isotropic line. The block extension
$\diag(g,I_{n-1})$ preserves $H$ and normalizes $\Gamma$,
so it transports the same calculation to that cusp.
The four local extensions agree with the map on $Y(3)$
and give a holomorphic map $f:X(3)\to\overline X$.
Since both varieties are projective, $f$ is algebraic.
Each added cusp meets $D$ with multiplicity one, while
$f(Y(3))\subset X$, so $f^*D=s_1+\cdots+s_4$.
\end{proof}

\begin{remark}[Necessity of distinguishing cusps]
Let $C'$ be a quasi-projective curve with distinct cusps $s_1$ and $s_2$,
and let
\(
h\colon C'\longrightarrow X
\)
be an injective morphism. Even if $U_1$ and $U_2$ are disjoint cusp
neighborhoods of $s_1$ and $s_2$, injectivity of $h$ does not imply that
they approach distinct Baily--Borel cusps of $X$. In principle, both
$h(U_1)$ and $h(U_2)$ could approach the same cusp $p_\infty$, as
illustrated in Figure~\ref{fig:cusp-identification-pictorial}.
Proposition~\ref{prop:compact-embedding} excludes this possibility in our
construction by distinguishing the $\Gamma$-orbits of the four cusp lines.
\end{remark}

\begin{figure}[htbp]
  \centering
  \includegraphics[width=.44\textwidth]
  {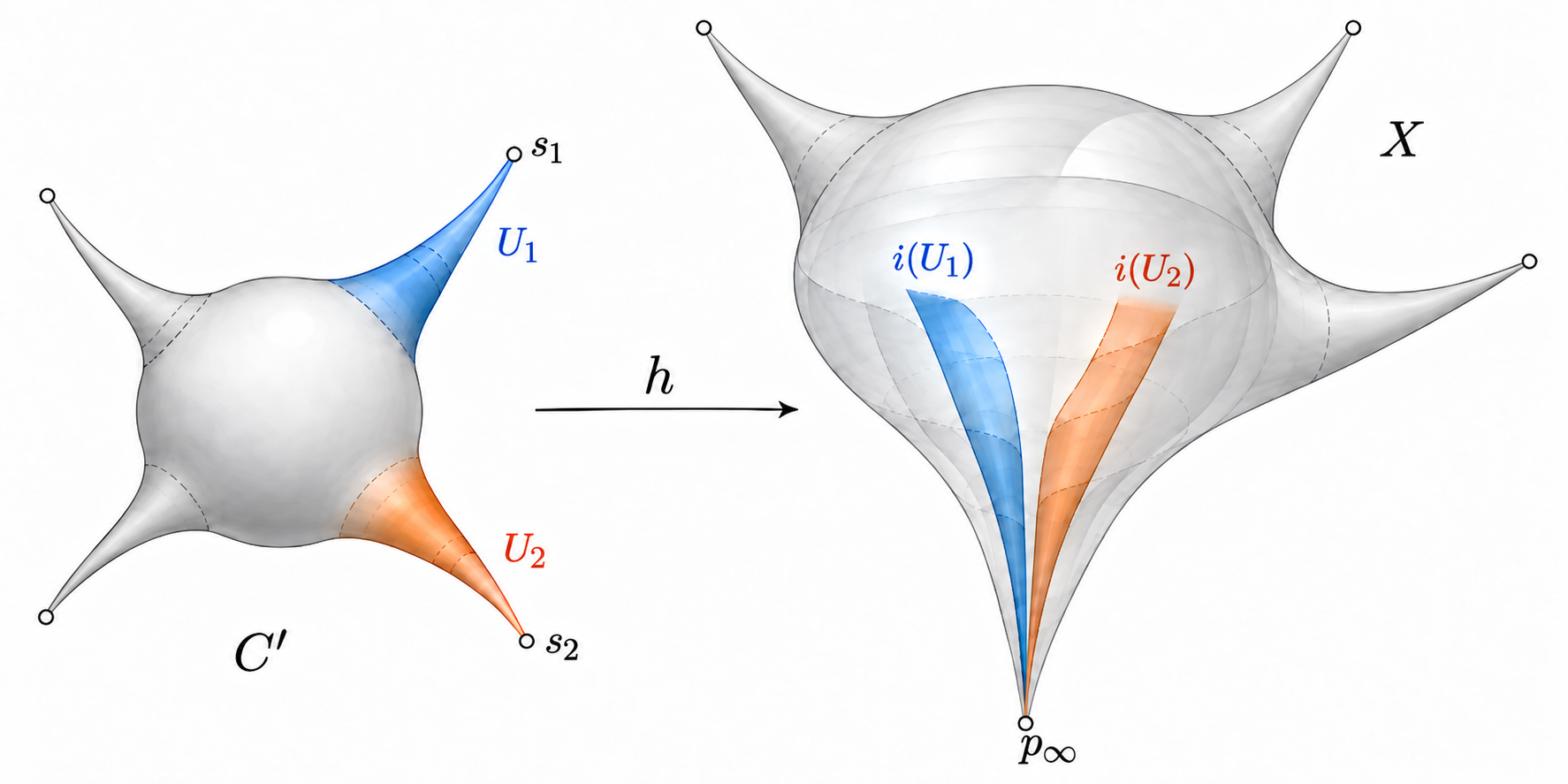}
  \caption{A schematic configuration in which two disjoint cusp neighborhoods $U_1$ and $U_2$ of a curve $C'$ may approach the same Baily--Borel cusp $p_\infty$ of $X$ under an injective map $h$. Proposition~\ref{prop:compact-embedding} excludes this configuration in our construction.}
  \label{fig:cusp-identification-pictorial}
\end{figure}

\begin{proposition}\label{prop:compact-embedding}
The map $f$ of Lemma~\ref{lem:cusp-width} is a closed embedding. Its image $C\simeq\PP^1$ meets $D$ transversely in four distinct points on four distinct boundary components, and $C\cap X$ is the embedded totally geodesic modular curve $Y(3)$.
\end{proposition}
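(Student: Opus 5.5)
The argument has three parts: injectivity on the interior, distinctness of the four boundary images, and local embedding at the cusps; these together give a closed embedding of $X(3)\simeq\PP^1$. On $Y(3)$ the map $f$ is injective by Lemma~\ref{lem:injectivity}. It is also an immersion there, since the map $\HH\simeq\B_W\hookrightarrow\B_H$ is a holomorphic embedding and both quotient maps are local biholomorphisms ($\Gamma$ is neat and $\Gamma_W$ acts freely). At each cusp, Lemma~\ref{lem:cusp-width} shows $f$ is an immersion transverse to $D$. Moreover $f(Y(3))\subset X$ while $f(s_j)\in D$, so no interior point is identified with a cusp. Once the four points $f(s_1),\dots,f(s_4)$ are shown to be distinct, $f$ is an injective immersion from a compact curve, hence a closed embedding. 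The image $C\simeq\PP^1$ then meets $D$ transversely exactly at these four points by $f^*D=s_1+\cdots+s_4$, and $C\cap X=f(Y(3))$ is the totally geodesic curve.

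To separate the boundary images, I would show the stronger statement that the four cusp lines of $W$ lie in four distinct $\Gamma$-orbits. Then they lie over distinct Baily--Borel cusps, hence in distinct boundary components of $D$, and in particular the points are distinct. Representatives of the four $\Gmod$-orbits on $\PP^1(\Q)$ are $\infty,0,1,-1$, corresponding to $a_1=e_1$, $a_2=e_2$, $a_3=e_1+e_2$, $a_4=e_1-e_2$. Their reductions in $\PP^1(\mathbb F_3)$ are the four distinct points.

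The invariant I would use is reduction modulo $3$. Take a primitive generator $a\in\OO_K^{n+1}$ of a $K$-defined isotropic line, with coordinates generating the unit ideal if possible. Its image in $\PP^n(\OO_K/3\OO_K)$ is unchanged by $\gamma\in\Gamma$, since $\gamma\equiv I$ modulo $3$. If $\gamma a_j=\lambda a_k$ with $\lambda\in K^\times$, primitivity forces $\lambda\in\OO_K^\times=\{\pm1\}$, because $p>3$ gives no extra units. Hence $a_j\equiv\pm a_k \pmod 3$. The vectors $e_1$, $e_2$, $e_1+e_2$, $e_1-e_2$ are pairwise not congruent up to sign modulo $3$: their first two coordinates give distinct points of $\PP^1(\mathbb F_3)$, and $\OO_K/3\OO_K\supset\mathbb F_3$ with $\mathbb F_3$ embedding injectively. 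So the orbits are distinct.

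The main obstacle is the primitivity step. $\OO_K=\Z[i\sqrt p]$ may have nontrivial class group, so "primitive" must be handled with ideals. I would argue directly: if $\gamma a_j=\lambda a_k$ with both $a_j,a_k$ having integer coordinates and content $1$, then $\lambda a_k\in\OO_K^{n+1}$ and $\gamma^{-1}(\lambda a_k)=a_j$. The ideal generated by the coordinates is preserved by $\GL_{n+1}(\OO_K)$, so $\lambda\OO_K=\OO_K$ and $\lambda$ is a unit. Since $3$ splits or is inert in $K$ and $\pm1$ are distinct modulo $3$, the congruence comparison then goes through.
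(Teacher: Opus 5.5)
Your proposal is correct and follows the paper's proof: you show the four cusp lines lie in distinct $\Gamma$-orbits by reduction modulo $3$, with primitivity forcing the scalar $\lambda$ to be a unit, and then conclude that an injective immersion of the compact curve is a closed embedding. The differences are minor. You use $\OO_K^\times=\{\pm1\}$ and compare the reductions in $\mathbb F_3$, whereas the paper allows an arbitrary unit and compares the reductions in $\PP^1(\mathbb F_9)$; your content-ideal argument also spells out the primitivity step that the paper states in one line.
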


\begin{proof}
We first show that the four cusps of $Y(3)$ map to distinct boundary components. Consider the primitive isotropic vectors
\[
v_1=e_1,\qquad v_2=e_2,\qquad
v_3=e_1+e_2,\qquad v_4=e_1-e_2.
\]
Their lines are images of $\C e_1$ under $\SL_2(\Z)$. Since $p\equiv1\pmod3$, we have
\begin{equation}\label{eq:inert-three}
\OO_K/3\OO_K\simeq\mathbb F_3[T]/(T^2+1)\simeq\mathbb F_9,
\end{equation}
and their reductions are the distinct points
$[1:0]$, $[0:1]$, $[1:1]$, and $[1:-1]$ in
$\PP^1(\mathbb F_9)$.

If $\gamma(\C v_j)=\C v_k$ for some $\gamma\in\Gamma$, then
$\gamma v_j=\lambda v_k$ with $\lambda\in K^\times$.
Primitivity and the integrality of $\gamma$ and $\gamma^{-1}$
give $\lambda\in\OO_K^\times$. Reducing modulo $3$ and using
$\gamma\equiv I_{n+1}$ therefore gives that 
$v_j\equiv\lambda v_k\pmod{3\OO_K},$ so $j=k$.
The four lines belong to distinct $\Gamma$-orbits, hence also
to distinct $\Gamma_W$-orbits. Since $Y(3)$ has four cusps,
they represent all of them, and their images lie on four
distinct boundary components.

It remains to prove that $f$ is a closed embedding.
By Lemma~\ref{lem:injectivity}, $f$ is injective on $Y(3)$.
The four added points have distinct images in $D$, while
$f(Y(3))\subset X$, so $f$ is injective on all of $X(3)$.
It is an immersion on the open curve by construction and
at every cusp by Lemma~\ref{lem:cusp-width}.
Since $X(3)$ is compact, this injective holomorphic immersion
is a closed embedding. The same lemma gives transversality
to $D$, and the description of $C\cap X$ follows from
Section~\ref{sec:arithmetic}.

\end{proof}

\section{The canonical degree}\label{sec:degrees}

Normalize the complex hyperbolic K\"ahler form $\omega$ on $\B^n$ so that its metric has constant holomorphic sectional curvature $-1$, and use the same symbol for the descended form on $X$. Its Ricci form is $\Ric(\omega)=-(n+1)\omega/2$, so the logarithmic Chern--Weil identity is
\begin{equation}\label{eq:chern-weil}
c_1(K_{\overline{X}}+D)=\frac{n+1}{4\pi}[\omega].
\end{equation}
Here $[\omega]$ is the class of the extended current. The identity, including its boundary extension, follows from Mumford's theory of good metrics \cite{Mumford}.

\begin{proposition}\label{prop:degrees}
For the curve $C$ of Proposition~\ref{prop:compact-embedding}, one has
\[
(K_{\overline{X}}+D)\cdot C=n+1,\qquad
D\cdot C=4,\qquad K_{\overline{X}}\cdot C=n-3.
\]
\end{proposition}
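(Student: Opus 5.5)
The plan is to compute the three intersection numbers in order: $D\cdot C$ first, then $(K_{\overline X}+D)\cdot C$, and $K_{\overline X}\cdot C$ by subtraction.

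\textbf{The boundary degree.} By Lemma~\ref{lem:cusp-width} and Proposition~\ref{prop:compact-embedding}, $f$ is a closed embedding with image $C$ and $f^*D=s_1+\cdots+s_4$. Hence
\[
D\cdot C=\deg f^*D=4.
\]

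\textbf{The logarithmic canonical degree.} Pull back the Chern--Weil identity \eqref{eq:chern-weil} along $f$, so that
\[
(K_{\overline X}+D)\cdot C=\frac{n+1}{4\pi}\int_{Y(3)}f^*\omega .
\]
The integral of the extended current over $C$ equals the integral of $\omega$ over the open curve $C\cap X=Y(3)$. This holds because $\omega$ has Poincar\'e-type (good) growth near $D$, and $C$ meets $D$ transversely at the four points, so no mass is concentrated there.

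Since $C\cap X$ is totally geodesic, $f^*\omega$ is the K\"ahler form of the induced metric on $\B_W$. A complex geodesic in a ball of holomorphic sectional curvature $-1$ is a disc of constant Gaussian curvature $-1$. The Gauss--Bonnet theorem for the complete finite-area hyperbolic surface $Y(3)$ then gives
\[
\int_{Y(3)}f^*\omega=\operatorname{Area}(Y(3))=-2\pi\chi(Y(3))=4\pi
\]
by \eqref{eq:modular-euler}. Therefore
\[
(K_{\overline X}+D)\cdot C=-\frac{n+1}{2}\chi(Y(3))=n+1.
\]
Subtracting $D\cdot C=4$ gives $K_{\overline X}\cdot C=n-3$.

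\textbf{Main obstacle.} The key step is justifying that pairing the class $c_1(K_{\overline X}+D)$ with $C$ may be computed by integrating the singular form over $C\cap X$. That is, one must check that the restriction of Mumford's good metric to a curve transverse to the boundary computes the degree of $(K_{\overline X}+D)|_C$.

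I would argue this as follows. Mumford's theory gives a good metric on $K_{\overline X}+D$ whose Chern form is $\frac{n+1}{4\pi}\omega$. Its restriction to $C$ is a singular metric on the line bundle $f^*(K_{\overline X}+D)$ over $\PP^1$. In the cusp coordinates $(\zeta,q)$, the metric has only $\log|q|$-type singularities, and along $C$ we have $\zeta=0$ with $q=\widetilde q$. So the restricted metric is good on the curve, and its Chern current has no point masses; its total mass is therefore the degree.

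As an independent check, I would use adjunction-style reasoning for the totally geodesic curve. The restriction of $\Omega^1_{\overline X}(\log D)$ to $C$ splits, through the second fundamental form vanishing, as the log cotangent bundle of $C$ plus a normal part. The curvature then gives degree $\frac{n+1}{2}\deg K_{X(3)}(s_1+\cdots+s_4)=\frac{n+1}{2}\cdot 2$, which again equals $n+1$.
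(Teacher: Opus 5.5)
Your proof is correct and follows the paper's route. You get $D\cdot C=4$ from $f^*D=s_1+\cdots+s_4$. You then restrict Mumford's good metric, using Poincar\'e growth and transversality at the four cusps, so that the logarithmic Chern--Weil identity together with Gauss--Bonnet on the curvature $-1$ curve $Y(3)$ gives $(K_{\overline X}+D)\cdot C=\frac{n+1}{4\pi}\cdot 4\pi=n+1$, and you subtract to obtain $K_{\overline X}\cdot C=n-3$; your adjunction-style cross-check via the splitting along the totally geodesic curve is an extra not used in the paper.
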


\begin{proof}
Since the inclusion is totally geodesic, the induced metric on $C\cap X\simeq Y(3)$ has Gaussian curvature $-1$. The invariant metric on $K_X$ is good in the sense of Mumford with respect to the extension $K_{\overline X}+D$, and its Chern form has Poincaré growth along $D$. Since $C$ meets $D$ transversely, its restriction has Poincaré growth at the four cusps of $C\cap X$. Applying \eqref{eq:chern-weil} and Gauss--Bonnet, and using \eqref{eq:modular-euler}, gives
\[
(K_{\overline{X}}+D)\cdot C
=-\frac{n+1}{2}\chi(C\cap X)
=-\frac{n+1}{2}\chi(Y(3))=n+1.
\]
Subtracting $D\cdot C=4$, established in Proposition~\ref{prop:compact-embedding}, yields
\(
K_{\overline{X}}\cdot C=n-3.
\)
\end{proof}

The geometry of Theorem~\ref{thm:all-dimensions} is now established for every prime $p\equiv1\pmod{12}$ and $n\geq2$. Only the distinction between commensurability classes remains.

\section{Noncommensurability and the main result}\label{sec:commensurability}

Fix a dimension $n\geq2$, and put $m=n+1$ and  
$\Pset=\{p\text{ prime}:p\equiv1\pmod{12}\}$. We now allow the prime $p$ to vary in $\Pset.$
In this section, we write $\Gamma_p$, $X_p$, $\overline X_p$, $D_p$, and $C_p$ for the lattice, variety, compactification, boundary, and curve of Sections~\ref{sec:arithmetic}--\ref{sec:degrees}. 

Let $\Gamma_p^{\mathrm{proj}}\subset\PU(H_p)$ denote the projective image of $\Gamma_p$. Two quotients are \emph{commensurable} if, after identifying their ball models with $\B^n$, their projective lattices are conjugate to subgroups whose intersection has finite index in both.

Our goal is to recover $K_p$ from any finite-index subgroup of $\Gamma_p$. We use its rational linear span inside $\Mat_m(\C)$. Because the subgroup contains the identity and is closed under multiplication, this span is a unital $\Q$-algebra. The next lemma identifies it with $\Mat_m(K_p)$, whose centre is the scalar copy of $K_p$. We will then show that projective commensurability gives conjugate rational algebras and hence equal fields.

\begin{lemma}\label{lem:span}
For every subgroup $\Gamma'_p$ of finite index in $\Gamma_p$,
\[
\Span_\Q(\Gamma'_p)=\Mat_m(K_p).
\]
\end{lemma}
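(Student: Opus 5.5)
The plan is to prove the two inclusions separately. The easy one is $\Span_\Q(\Gamma'_p)\subseteq\Mat_m(K_p)$, which holds because $\Gamma'_p\subset\SL_m(\OO_{K_p})$. Put $A=\Span_\Q(\Gamma'_p)$. As noted before the lemma, $A$ is a unital $\Q$-subalgebra of $\Mat_m(K_p)$. For the reverse inclusion I will produce explicit elements of $A$: the unipotent elements of $\Gamma'_p$ will give nilpotent matrices, and from these we build the matrix units over $K_p$.

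\textbf{Step 1: nilpotent elements.} For any unipotent $u=I+N\in\Gamma_p$, with $N^k=0$ for some $k$, some power $u^\ell$ lies in $\Gamma'_p$, since the index is finite. The $\Q$-span of the powers $u^{\ell j}=(I+\ell N)^j$ for $j=0,\dots,k-1$ contains $\ell N$, by a Vandermonde/binomial inversion (equivalently, $\log u^\ell$ is a polynomial in $u^\ell$ with rational coefficients). Hence $N\in A$. I apply this to the unipotent radicals of the two opposite $K_p$-rational parabolics fixing $\C e_1$ and $\C e_2$.

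Concretely, I write down the Heisenberg elements in $\SU(H_p,\OO_{K_p})$ with congruence level $3$. These are:
\begin{itemize}
\item the vertical ones $I+tE_{12}$ with $t\in\Z$;
\item the horizontal ones, which have the form $I+N_\eta$, where $N_\eta$ has entries $\eta\in\OO_{K_p}^{n-1}$ in row $1$ (columns $3,\dots,m$), the matching entries $\overline{\eta}$-type terms (scaled by $(i\sqrt p)^{-1}$ as needed to satisfy $u^*Hu=H$) in column $2$ (rows $3,\dots,m$), and a compensating $(1,2)$ entry;
\item their transposes for the stabilizer of $\C e_2$.
\end{itemize}
After clearing denominators by suitable multiples, these exist for all $\eta$ in a finite-index sublattice. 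Taking $\Q$-linear combinations of the $N$'s gives $E_{12}$, $E_{21}$, and, for each $j\ge3$, matrices of the form $\eta E_{1j}+c\,\overline\eta E_{j2}$. Multiplying inside $A$ then separates terms: for example $E_{21}(\eta E_{1j}+\cdots)=\eta E_{2j}$, and right multiplication by $E_{12}$ or $E_{21}$ isolates pieces. This yields $\eta E_{1j}\in A$ for $\eta$ ranging over a lattice spanning $K_p$ over $\Q$, and similarly for $E_{j1}$ and $E_{j2}$, $E_{2j}$.

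\textbf{Step 2: generate everything.} Once $A$ contains $\alpha E_{1j}$ and $\beta E_{j1}$ for all $\alpha,\beta\in K_p$ and all $j$ (with $E_{11}=E_{12}E_{21}$ handling $j=1,2$), the products $(\alpha E_{j1})(\beta E_{1k})=\alpha\beta E_{jk}$ give all of $\Mat_m(K_p)$. The only subtlety is obtaining both $1$ and $i\sqrt p$ as coefficients in the same position, and this is exactly what the horizontal translations with $\eta\in\OO_{K_p}$ supply. As an alternative, I could note that $A$ spans $\Mat_m(\C)$ over $\C$ by Borel density (since $\Gamma'_p$ is Zariski dense in $\SU(H_p)$, whose complex span is everything, because $\SL_m(\C)$ acts irreducibly and by Burnside). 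However, that alone does not give the $\Q$-dimension $2m^2$, so I prefer the explicit argument.

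\textbf{Main obstacle.} The hardest part will be writing down the horizontal Heisenberg elements correctly, that is, verifying that the proposed $N_\eta$ satisfies the unitarity condition and the level-$3$ congruence, and keeping track of the coefficients involving $i\sqrt p$ in column $2$ so that the products in Step 1 really isolate single matrix units with arbitrary $K_p$-coefficients. My first attempt will be to use $u=I+N$ with $N^*H+HN+N^*HN=0$, solve for the $(1,2)$ entry as $\|\eta\|^2/(2i\sqrt p)$-type, and restrict $\eta$ to $6i\sqrt p\,\OO_{K_p}^{n-1}$ so that integrality and the congruence hold. If the $(1,2)$ entry then fails to be integral, I will pass to squares or other powers, which is harmless because only $\Q$-spans matter.
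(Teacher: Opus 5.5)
Your proposal is correct, but it takes a different route from the paper. The paper uses no unipotent elements. It works with the \emph{real} span $\mathcal A=\Span_\R(\Gamma'_p)$, which is real Zariski closed, so Borel density gives $\SU(H_p)\subset\mathcal A$. Since $m\geq3$, the nonreal scalar $e^{2\pi i/m}I_m$ lies in $\SU(H_p)$, so $iI_m\in\mathcal A$. Hence $\mathcal A$ is a complex subspace, and therefore $\mathcal A=\Mat_m(\C)$. In the basis $\{1,i\sqrt p\}$, the elements of $\Gamma'_p$ are rational vectors in $\Q^{2m^2}$ of $\R$-rank $2m^2$, hence of $\Q$-rank $2m^2$. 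This is exactly the answer to your objection to Borel density: the complex span alone does not determine the $\Q$-dimension, but the real span together with the scalar trick does.

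Your argument replaces Borel density by explicit unipotents in the two cusp stabilizers and builds the matrix units by multiplication. It is elementary and self-contained. However, it relies on non-uniformity, on the particular form $H_p$ and level, and on a hand computation. The paper's proof is shorter and applies verbatim to finite-index subgroups of any lattice in $\SU(H)$ with entries in $K$, cocompact or not. Both arguments use $n\geq2$ at a single point: you need the horizontal translations (for $n=1$ one only gets $\Mat_2(\Q)$), and the paper needs $e^{2\pi i/m}\notin\R$.

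When you write out the details, correct the following.
\begin{itemize}
\item The horizontal nilpotents satisfy $N^2=\bigl(\sum_j a_jc_j\bigr)E_{12}\neq0$, so $u^\ell\neq I+\ell N$. Your $\log$/Vandermonde remark is the valid justification.
\item Unitarity forces $c_j=i\sqrt p\,\overline{a_j}$ and $b=t+\tfrac{i\sqrt p}{2}\|a\|^2$ with $t\in\R$. So the column-$2$ entries involve multiplication by $i\sqrt p$, not division. Taking $a\in6\OO_K^{n-1}$ and $t\in3\Z$ puts $u$ in $\Gamma_p$.
\item Literal transposes of these elements do not lie in $\U(H_p)$. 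You only need $E_{21}$, which comes from $I+3E_{21}\in\Gamma_W$.
\end{itemize}
With $E_{11}=E_{12}E_{21}$ and $E_{22}=E_{21}E_{12}$, take $a$ with a single nonzero coordinate $a_j\in\{6,6i\sqrt p\}$. The products $E_{11}N(I-E_{22})=a_jE_{1j}$ and $(I-E_{11})NE_{22}=c_jE_{j2}$ then give $K_pE_{1j}\subset A$ and $K_pE_{j2}\subset A$. Multiplying by $E_{21}$ on the left, respectively right, yields $K_pE_{2j}$ and $K_pE_{j1}$, and your Step 2 finishes the argument.
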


\begin{proof}
To prove the rational-span statement, we first determine the real span using Borel density. We then use rational coordinates for the entries in $K_p$ to descend the dimension calculation to $\Q$.

Set $\mathcal A=\Span_\R(\Gamma'_p)\subset\Mat_m(\C)$. This is a unital real algebra. As a real linear subspace of a finite-dimensional matrix space, it is real Zariski closed. The subgroup $\Gamma'_p$ is a lattice in the connected real algebraic group $\SU(H_p)$, which has no compact factors. By Borel density \cite[Corollary~4.5.6]{Morris}, its real Zariski closure is $\SU(H_p)$. Consequently $\SU(H_p)\subset\mathcal A$.

The complex Zariski closure of $\SU(H_p)$ is $\SL_m(\C)$, since $\SU(H_p)$ is a real form of that complex group. Hence its complex linear span contains $\SL_m(\C)$. This group spans $\Mat_m(\C)$: the differences $(I_m+E_{jk})-I_m$ give the off-diagonal matrix units, and the diagonal matrices of determinant one span the diagonal subspace.

We still need to see that the real algebra $\mathcal A$ contains this complex span. Because $m\geq3$, the scalar $\zeta=\exp(2\pi i/m)$ is nonreal, and $\zeta I_m\in\SU(H_p)\subset\mathcal A$. Therefore
\[
iI_m=\frac{\zeta I_m-(\operatorname{Re}\zeta)I_m}{\operatorname{Im}\zeta}
\in\mathcal A.
\]
Closure under multiplication makes $\mathcal A$ a complex linear subspace. It contains the complex span of $\SU(H_p)$, so $\mathcal A=\Mat_m(\C)$.

Finally, write each entry of a matrix over $K_p$ uniquely as $a+ib\sqrt{p}$ with $a,b\in\Q$. This identifies $\Mat_m(K_p)$ with $\Q^{2m^2}$; the same coordinates identify $\Mat_m(\C)$, as a real vector space, with $\R^{2m^2}$. The coordinate vectors of elements of $\Gamma'_p$ are rational, so their rank over $\Q$ equals their rank over $\R$. The full real span just proved has dimension $2m^2$. Their rational span consequently has that same dimension and equals $\Mat_m(K_p)$.
\end{proof}

The centre of an algebra consists of the elements commuting with every element of that algebra. For the algebra in Lemma~\ref{lem:span}, it is
\(
\{aI_m:a\in K_p\}.
\)
To compare these centres under commensurability, we must account for the scalar ambiguity of a projective conjugating map. The scalar factors are roots of unity of bounded order; passing to a further finite-index subgroup removes them.

\begin{proposition}\label{prop:noncommensurable}
For every fixed $n\geq2$ and distinct $p,q\in\Pset$, the Picard modular varieties $X_p$ and $X_q$ are not commensurable.
\end{proposition}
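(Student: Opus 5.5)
We argue by contradiction. Suppose $X_p$ and $X_q$ are commensurable. After identifying both ball models $\B_{H_p}$ and $\B_{H_q}$ with $\B^n$, there is then an element $g\in\PU(1,n)$ such that $g\Gamma_p^{\mathrm{proj}}g^{-1}\cap\Gamma_q^{\mathrm{proj}}$ has finite index in both $g\Gamma_p^{\mathrm{proj}}g^{-1}$ and $\Gamma_q^{\mathrm{proj}}$.

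We first lift this to linear groups. Choose linear maps $T_p,T_q\in\GL_m(\C)$ carrying $H_p$ and $H_q$ to $\diag(1,-1,\ldots,-1)$, and lift $g$ to $\tilde g\in\U(1,n)$. Put $S=T_q^{-1}\tilde gT_p$. Then $\gamma\mapsto S\gamma S^{-1}$ carries $\SU(H_p)$ onto $\SU(H_q)$, and it induces the projective conjugation.

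Let $\Lambda_p\subset\Gamma_p$ be the preimage of $\Gamma_p^{\mathrm{proj}}\cap g^{-1}\Gamma_q^{\mathrm{proj}}g$. It has finite index in $\Gamma_p$, since $\Gamma_p\to\Gamma_p^{\mathrm{proj}}$ is injective by neatness. For $\gamma\in\Lambda_p$ there is a unique $\delta\in\Gamma_q$ with $S\gamma S^{-1}=\mu(\gamma)\delta$, where $\mu(\gamma)$ is a scalar. Taking determinants shows $\mu(\gamma)^m=1$.

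The map $\gamma\mapsto\mu(\gamma)$ is a homomorphism to the finite group $\mu_m$. This uses that scalars are central and that $\Gamma_q$ meets the scalars trivially. Its kernel $\Lambda'_p$ therefore has finite index in $\Gamma_p$. Its image $S\Lambda'_pS^{-1}=\Lambda'_q$ lies in $\Gamma_q$ and has finite index there: the symmetric argument with $g^{-1}$ shows the image of $\Lambda_p$ has finite index, and the kernel step only loses bounded index.

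Conjugation by $S$ is a $\Q$-algebra isomorphism of $\Mat_m(\C)$. Hence
\[
S\,\Span_\Q(\Lambda'_p)\,S^{-1}=\Span_\Q(\Lambda'_q).
\]
By Lemma~\ref{lem:span}, this is an isomorphism of $\Q$-algebras $\Mat_m(K_p)\simeq\Mat_m(K_q)$. It restricts to an isomorphism of centres, and these centres are $K_pI_m$ and $K_qI_m$. So $K_p\simeq K_q$ as fields.

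It remains to distinguish the fields. The discriminants are $-4p$ and $-4q$, and these differ since $p\neq q$. Equivalently, $\Q(i\sqrt p)=\Q(i\sqrt q)$ would force $pq$ to be a rational square, which is impossible for distinct primes. This contradiction proves Proposition~\ref{prop:noncommensurable}.

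The main obstacle is the bookkeeping of scalars when passing from projective to linear conjugation. This is why we pass to the kernel $\Lambda'_p$ of $\mu$, so that the conjugate lands honestly inside $\Gamma_q$ with finite index. Lemma~\ref{lem:span} can then be applied on both sides.
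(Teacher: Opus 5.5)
Your proposal is correct and follows the paper's proof essentially step for step: lift the projective conjugacy to a matrix $S$, pass to the kernel of the $\mu_m$-valued scalar character to obtain an exact conjugacy $S\Lambda'_pS^{-1}=\Lambda'_q$ of finite-index subgroups, apply Lemma~\ref{lem:span}, and compare centres. The only cosmetic difference is at the end. You conclude $K_p\simeq K_q$ abstractly and invoke the discriminant, whereas the paper observes that conjugation fixes scalar matrices and so obtains $K_p=K_q$ inside $\C$ directly.
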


\begin{proof}
We show that commensurability would force $K_p=K_q$. First we lift the projective conjugacy to an exact matrix conjugacy on finite-index subgroups. Lemma~\ref{lem:span} will then identify the conjugate algebras, and their centres will determine the fields:

Suppose the quotients are commensurable, and let $T\in\GL_m(\C)$ represent the resulting projective isometry from $\B_{H_p}$ to $\B_{H_q}$. Let $\Lambda$ be the finite-index intersection
\[
\Lambda=(T\Gamma_p^{\mathrm{proj}}T^{-1})\cap\Gamma_q^{\mathrm{proj}},
\]
where conjugation is interpreted projectively. The inverse images of $T^{-1}\Lambda T$ in $\Gamma_p$ and of $\Lambda$ in $\Gamma_q$ are finite-index subgroups $\Gamma'_p$ and $\Gamma'_q$. Projectivization is injective on both neat lattices, so this conjugacy induces an isomorphism $\varphi:\Gamma'_p\to\Gamma'_q$. For each $\gamma\in\Gamma'_p$, there is a unique scalar $\varepsilon_\gamma$ with
\[
T\gamma T^{-1}=\varepsilon_\gamma\varphi(\gamma).
\]
Both matrices have determinant one, hence $\varepsilon_\gamma\in\mu_m$, where $\mu_m=\{\varepsilon\in\C^\times:\varepsilon^m=1\}$ is the finite group of $m$th roots of unity. Since $\varphi$ is a homomorphism, $\gamma\mapsto\varepsilon_\gamma$ is a homomorphism to $\mu_m$. Its kernel $\Gamma''_p$ has finite index. Put $\Gamma''_q=\varphi(\Gamma''_p)$. We now have an exact equality of matrix groups,
\[
T\Gamma''_pT^{-1}=\Gamma''_q.
\]
Taking rational spans and applying Lemma~\ref{lem:span} gives
\(
T\Mat_m(K_p)T^{-1}=\Mat_m(K_q).
\)
Conjugation carries the centre of one algebra to the centre of the other. But it fixes every complex scalar matrix, so
\[
\{aI_m:a\in K_p\}=\{aI_m:a\in K_q\}.
\]
Therefore $K_p=K_q$ as subfields of $\C$. This contradicts their distinct discriminants $-4p$ and $-4q$. Hence $X_p$ and $X_q$ are not commensurable.
\end{proof}

\begin{proof}[Proof of Theorem~\ref{thm:all-dimensions}]
Fix $n\geq2$. For each $p\in\Pset$, Sections~\ref{sec:arithmetic} and \ref{sec:compact-curve} construct a smooth noncompact Picard modular variety $X_p$ whose smooth projective toroidal compactification contains a smooth rational curve $C_p$. By Proposition~\ref{prop:compact-embedding}, the curve meets the boundary transversely in four points on four distinct components, and its open part is the embedded totally geodesic curve $Y(3)$. Proposition~\ref{prop:degrees} gives $K_{\overline X_p}\cdot C_p=n-3$. The set $\Pset$ is infinite, and Proposition~\ref{prop:noncommensurable} distinguishes every pair of its quotients. These statements prove the theorem.
\end{proof}

\begin{proof}[Proof of Theorem~\ref{thm:threefolds}]
Take $n=3$ in Theorem~\ref{thm:all-dimensions}. Each constructed curve has $K_{\overline X}\cdot C=0$, whereas an ample divisor has positive degree on every projective curve. Hence $K_{\overline X}$ is not ample.
\end{proof}

\bigskip
\begingroup
\small
\setlength{\parskip}{6pt}
\noindent\address{Department of Mathematics, Purdue University, West Lafayette, IN 47907, USA};
\email{smemaria@purdue.edu}\par
\noindent\address{Department of Mathematics, Purdue University, West Lafayette, IN 47907, USA};
\email{yeungs@purdue.edu}\par
\endgroup


\begin{thebibliography}{DCDC17}
\phantomsection
\addcontentsline{toc}{section}{References}

\bibitem[BB66]{BailyBorel}
W.~L. Baily, Jr. and A.~Borel,
\emph{Compactification of arithmetic quotients of bounded symmetric domains},
Ann. of Math. (2) \textbf{84} (1966), no.~3, 442--528.

\bibitem[BT18]{BakkerTsimerman}
B.~Bakker and J.~Tsimerman,
\emph{The Kodaira dimension of complex hyperbolic manifolds with cusps},
Compos. Math. \textbf{154} (2018), no.~3, 549--564.

\bibitem[BHC62]{BorelHarishChandra}
A.~Borel and Harish-Chandra,
\emph{Arithmetic subgroups of algebraic groups},
Ann. of Math. (2) \textbf{75} (1962), 485--535.

\bibitem[Cad22]{Cadorel}
B.~Cadorel,
\emph{Subvarieties of quotients of bounded symmetric domains},
Math. Ann. \textbf{384} (2022), 1--38; arXiv:1809.10978.

\bibitem[DCDC17]{DiCerboDiCerbo}
G.~Di Cerbo and L.~F. Di Cerbo,
\emph{On the canonical divisor of smooth toroidal compactifications},
Math. Res. Lett. \textbf{24} (2017), no.~4, 1005--1022.

\bibitem[DS16]{DiCerboStoverMultiple}
L.~F. Di Cerbo and M.~Stover,
\emph{Multiple realizations of varieties as ball quotient compactifications},
Michigan Math. J. \textbf{65} (2016), no.~2, 441--447.

\bibitem[DS18]{DiCerboStover}
L.~F. Di Cerbo and M.~Stover,
\emph{Classification and arithmeticity of toroidal compactifications with $3\overline c_2=\overline c_1^2=3$},
Geom. Topol. \textbf{22} (2018), no.~4, 2465--2510.

\bibitem[DR15]{DimitrovRamakrishnan}
M.~Dimitrov and D.~Ramakrishnan,
\emph{Arithmetic quotients of the complex ball and a conjecture of Lang},
Doc. Math. \textbf{20} (2015), 1185--1205.

\bibitem[Hir84]{Hirzebruch}
F.~Hirzebruch,
\emph{Chern numbers of algebraic surfaces: an example},
Math. Ann. \textbf{266} (1984), 351--356.


\bibitem[Hol80]{Hol80}
R. P.~Holzapfel, \emph{A class of minimal surfaces in the unknown region of surface geography}.
Math. Nachr. \textbf{98 }(1980), 211-232.

 \bibitem[KM98]{KollarMori}
 J.~Koll\'ar and S.~Mori,
 \emph{Birational geometry of algebraic varieties},
 Cambridge Tracts in Mathematics, vol.~134,
 Cambridge University Press, Cambridge, 1998.

\bibitem[Laz04]{Lazarsfeld}
 R.~Lazarsfeld,
 \emph{Positivity in algebraic geometry I: Classical setting:
 line bundles and linear series},
 Ergebnisse der Mathematik und ihrer Grenzgebiete (3), vol.~48,
 Springer-Verlag, Berlin, 2004.
 
\bibitem[Loo03]{LooijengaBall}
E.~Looijenga,
\emph{Compactifications defined by arrangements, I: The ball quotient case},
Duke Math. J. \textbf{118} (2003), no.~1, 151--187.

 
\bibitem[Mem24]{MemarianCotangent}
S.~Memariansorkhabi,
\emph{Positivity of the cotangent bundle of complex hyperbolic manifolds with cusps},
arXiv:2212.10816, version~3 (2024).

\bibitem[Mem25]{MemarianVolumes}
S.~Memariansorkhabi,
\emph{Volumes of subvarieties of complex ball quotients and sparsity of rational points},
J. Reine Angew. Math. \textbf{829} (2025), 177--215.

\bibitem[Mor15]{Morris}
D.~Witte Morris,
\emph{Introduction to Arithmetic Groups},
arXiv:math/0106063, version~6 (2015).

\bibitem[Mum77]{Mumford}
D.~Mumford,
\emph{Hirzebruch's proportionality theorem in the non-compact case},
Invent. Math. \textbf{42} (1977), 239--272.

\bibitem[Nad89]{Nadel}
A.~M. Nadel,
\emph{The nonexistence of certain level structures on abelian varieties over complex function fields},
Ann. of Math. (2) \textbf{129} (1989), no.~1, 161--178.

\bibitem[Par98]{Parker}
J.~R. Parker,
\emph{On the volumes of cusped, complex hyperbolic manifolds and orbifolds},
Duke Math. J. \textbf{94} (1998), no.~3, 433--464.

\bibitem[PY07]{PrasadYeung}
G.~Prasad and S.-K.~Yeung,   
\emph{Fake projective planes}, Inv.\,Math. \textbf{168}(2007), 321-370; Addendum, ibid \textbf{182}(2010), 213-227.

\end{thebibliography}
\end{document}